\makeatletter \@addtoreset{equation}{section} \makeatother
\renewcommand\thefigure{\thesection.\@arabic\c@figure}
\renewcommand\thetable{\thesection.\@arabic\c@table}
\newtheorem{theorem}{Theorem}[section]
\newtheorem{lemma}[theorem]{Lemma}
\newtheorem{proposition}[theorem]{Proposition}
\newtheorem{remark}[theorem]{Remark}
\newcommand{\mc}[1]{{\mathcal #1}}
\newcommand{\mf}[1]{{\mathfrak #1}}
\newcommand{\bb}[1]{{\mathbb #1}}
\newcommand{\<}{\langle}
\renewcommand{\>}{\rangle}
\newcommand{\x}{\!\otimes}
\begin{document}

\title[Equilibrium fluctuations]{Equilibrium fluctuations for gradient exclusion processes with conductances in random environments}

\author [J. Farfan] {Jonathan Farfan}
\author [A.B. Simas] {Alexandre B. Simas}
\author [F.J. Valentim]{Fábio J. Valentim}
\address{\hfill\break\indent
IMPA \hfill\break\indent
Estrada Dona Castorina 110, \hfill\break\indent
J. Botanico, 22460 Rio
de Janeiro, Brazil}
%\email{jonathan@impa.br, alesimas@impa.br\and valentim@impa.br}

\thanks{Research supported by CNPq}

\noindent\keywords{Sobolev spaces, Elliptic equations, Parabolic equations, Homogenization, Hydrodynamic limit}

\subjclass[2000]{46E35, 35J15, 35K10, 35B27, 35K55}

\begin{abstract}

We study the equilibrium fluctuations for a gradient exclusion process with conductances in random environments, which can be viewed as a central limit theorem for the empirical distribution of particles
 when the system starts from an equilibrium measure.
\end{abstract}
\maketitle
\section{Introduction}
In this article we study the equilibrium fluctuations for a gradient exclusion process with conductances in random environments, which can be viewed as a central limit theorem for the empirical distribution of particles
 when the system starts from an equilibrium measure.

Let $W:\bb R^d\to \bb R$ be a function such that
$ W(x_1,\ldots,x_d) = \sum^d_{k=1}W_k(x_k)$, 
where $d\geq 1$ and each function $W_k: \bb R \to \bb R$ is strictly increasing, right continuous with left limits (c\`adl\`ag), and periodic in the sense that  $W_k(u+1) - W_k(u) = W_k(1) - W_k(0),$ for all $u\in \bb R$. The function $W$ will play the role of conductances in our system.
 
The random environment we considered is governed by the coefficients of the discrete formulation of the model (the process on the lattice). We will assume the underlying random field is ergodic, stationary and satisfies an ellipticity condition.

Informally, the exclusion process with conductances associated to $W$ is an interacting particle systems on the $d$-dimensional discrete torus $N^{-1}\bb T^d_N$, in which at most one particle per site is allowed, and only nearest-neighbor jumps are permitted. Moreover, the jump rate in the direction $e_j$ is proportional to the reciprocal of the increments of $W$ with respect to the $j$th coordinate.
Such a system can be understood as a model for diffusion in heterogeneous media. 

The purpose of this article is to study the density fluctuation field of this system as $N \to \infty$, and also the influence
of the randomness in this limit. For
any realization of the random environment, the scaling limit depends on the randomness only through some constants
which depend on the distribution of the random transition rates, but not on the particular realization of the random environment.

The evolution of one-dimensional exclusion processes with random conductances has attracted some attention recently \cite{jl,f,fjl,tc}. In all of these papers, a hydrodynamic limit was proved. The hydrodynamic limit may be interpreted as a law of large numbers for the empirical density of the system. Our goal is to go beyond the hydrodynamic limit and provide a new result for such processes, which is the equilibrium fluctuations and can be seen as a central limit theorem for the empirical density of the process.

To prove the equilibrium fluctuations, we would like to call attention to the main tools we needed: (i) the theory of nuclear spaces and (ii) homogenization of differential operators. The first one followed the classical approach of Kallianpur and Perez-Abreu \cite{kp} and Gel'fand and Vilenkin \cite{g}. Nuclear spaces are very suitable to attain existence and uniqueness of solutions for a general class of stochastic differential equations. Furthermore, tightness of processes on such spaces was established by Mitoma \cite{Mit}. A wide literature on these spaces can be found cited inside the fourth volume of the amazing collection by Gel'fand \cite{g}. The second tool is motivated by several applications in mechanics, physics, chemistry and engineering. We will consider stochastic homogenization. In the stochastic context, several works on homogenization of operators with random coefficients have been published (see, for instance, \cite{papa,pr} and references therein). In homogenization theory, only the stationarity of such random field is used. The notion of stationary random field is formulated in such a manner that it covers many objects of non-probabilistic nature, e.g., operators with periodic or quasi-periodic coefficients. We follow the approach given in \cite{sv}, which was introduced by \cite{pr}. 

The focus of our approach is to study the asymptotic behavior of effective coefficients for a family of random difference schemes, whose coefficients can be obtained by the discretization of random high-contrast lattice structures. Furthermore, the introduction of a corrected empirical measure was needed. The corrected empirical measure was used in the literature, for instance, by \cite{jl,tc,gj,v,sv}. It can be understood as a version of Tartar's  compensated compactness lemma in the context of particle systems. In this situation, the averaging due to the dynamics and the inhomogeneities introduced by the random media factorize after introducing the corrected empirical process, in such a way that we can average them separately. It is noteworthy that we managed to prove an equivalence between the asymptotic behavior with respect to both the corrected empirical measure and the uncorrected one. This equivalence was helpful in the sense that whenever the calculation with the corrected empirical measure turned cumbersome, we changed to a calculation with respect to the uncorrected one, and the other way around. This whole approach made the proof a lot more simpler than the usual one with respect solely to the corrected empirical measure developed in the articles mentioned above.

We now describe the organization of the article. In Section \ref{sec2} we state the main results of the article; in Section \ref{nuclear} we define the nuclear space needed in our context; in Section \ref{ef} we recall some results obtained in \cite{sv} about homogenization, and then we prove the equilibrium fluctuations by showing that the density fluctuation field converges to a process that solves the martingale problem. We also show that the solution of the martingale problem corresponds to a generalized Ornstein-Uhlenbeck process. In Section \ref{tig} we prove tightness of the density fluctuation field, as well as tightness of other related quantities. In Section \ref{s4} we prove the Boltzmann-Gibbs principle, which is a key result for proving the equilibrium fluctuations. Finally, the Appendix contains some known results about nuclear spaces and stochastic differential equations evolving on topologic dual of such spaces.

\section{Notation and results}\label{sec2} 
Denote by $\bb T^d = ({\bb R}/{\bb Z})^d = [0, 1)^d$ the $d$-dimensional torus, and by
$\bb T^d_N=(\bb Z/N\bb Z)^d = \{0,\ldots,N-1\}^d$ the $d$-dimensional discrete torus with $N^d$
points.

Fix a function $W: \bb R^d \to \bb R$ such that
\begin{equation}
\label{w}
W(x_1,\ldots,x_d) = \sum^d_{k=1}W_k(x_k),
\end{equation}
where each $W_k: \bb R \to \bb R$ is a \emph{strictly increasing} right continuous function with left
limits (c\`adl\`ag),
 periodic in the sense that for all $u\in \bb R$
 $$W_k(u+1) - W_k(u) = W_k(1) - W_k(0).$$ 
 
Define the generalized derivative $\partial_{W_k}$ of a function $f:\bb T^d \to \bb R$ by
\begin{equation}
\label{f004}
\partial_{W_k} f (x_1,\!\ldots\!,x_k,\ldots, x_d) = \lim_{\epsilon\rightarrow 0}
\frac{f(x_1,\!\ldots\!,x_k +\epsilon,\ldots, x_d)
-f(x_1,\!\ldots\!,x_k,\!\ldots\!, x_d)}{W_k(x_k+\epsilon) -W_k(x_k)}\;,
\end{equation} 
when the above limit exists and is finite. If for a function $f:\bb T^d\to\bb R$ the generalized derivatives $\partial_{W_k}$ exist for all $k=1,\ldots,d$, denote the generalized gradient of $f$ by
$$\nabla_W f = \left(\partial_{W_1}f,\ldots,\partial_{W_d}f\right).$$
Further details on these generalized derivatives can be found in subsection \ref{lw} and in the article \cite{sv}.

We now introduce the statistically homogeneous rapidly oscillating coefficients that will be used to define the random rates of the exclusion process with conductances in which we want to study the equilibrium fluctuations.

Let $(\Omega,\mc F, \mu)$ be a standard probability space and $\{ T_x : \Omega \to \Omega; x\in \bb Z^d\}$
be a group of $\mc F$-measurable and ergodic transformations which preserve the measure $\mu$:
\begin{itemize}
\item $ T_x : \Omega \to \Omega$ is $\mc F$-measurable for all $x \in \bb Z^d$,
\item $\mu(T_x \textbf{A}) = \mu(\textbf{A})$, for any $\textbf{A} \in \mc F$ and $x\in \bb Z^d$,
\item $T_0 = \textit{I}\;,\; \;T_x\circ T_y = T_{x+y}$,
\item For any  $f\in L^1(\Omega)$ such that $f(T_x\omega)=f(\omega)\;\; \mu$-a.s  for each $x\in \bb Z^d$, 
is equal to a constant $\mu$-a.s.
\end{itemize}
The last condition implies that the group $T_x$ is ergodic. 

Let us now introduce the vector-valued $\mc F$-measurable functions $\{a_j(\omega) ; j=1,\ldots, d\}$ such that there exists
$\theta >0$ with $$\theta^{-1} \le a_j(w)\le \theta,$$ 
for all $\omega \in \Omega$ and $ j= 1,\ldots, d$.  Then, define the diagonal matrices $A^N$ whose elements  are given by
\begin{equation}
\label{AN}
a^N_{jj}(x):=a^N_j = a_j(T_{Nx}\omega)\;,\;\;x\in T^d_N\;,\;\;j = 1, \ldots , d.
\end{equation}

Fix a typical realization $\omega \in \Omega$ of the random environment. For each $x\in \bb T^d_N$ and $j = 1,\ldots, d$, 
define the symmetric rate $\xi_{x, x+e_j}=\xi_{x+e_j, x}$ by
\begin{equation}\label{rate}
\xi_{x, x+e_j} \;=\; \frac{a^N_j(x)}{N[W((x+e_j)/N) - W(x/N)]}\;=\;
\frac{a^N_j(x)}{N[W_j((x_j+1)/N) - W_j(x_j/N)]},
\end{equation}
where ${e_1,\ldots ,e_d}$ is the canonical basis of $\bb R^d$.

Distribute particles on $\bb T^d_N$ in such a way that each site
of $\bb T^d_N$ is occupied at most by one particle. Denote by $\eta$ the
configurations of the state space $\{0,1\}^{\bb T^d_N}$ so that $\eta(x) =0$
if site $x$ is vacant, and $\eta(x)=1$ if site $x$ is occupied.

The  exclusion process with conductances in a random environment is a continuous-time Markov process
$\{\eta_t : t\ge 0\}$ with state space $\{0,1\}^{\bb T^d_N} =\{\eta:\bb T^d_N \to \{0,1\}\}$, 
whose generator $L_N$ acts on functions $f:
\{0,1\}^{\bb T^d_N} \to \bb R$ as
\begin{equation}
\label{g4}
(L_N f) (\eta) \;=\;\sum^d_{j=1} \sum_{x \in \bb T^d_N} \xi_{x,x+e_j}c_{x,x+e_j}(\eta)\, 
\{ f(\sigma^{x,x+e_j} \eta) - f(\eta) \} \;,
\end{equation}
where $\sigma^{x,x+e_j} \eta$ is the configuration obtained from $\eta$
by exchanging the variables $\eta(x)$ and $\eta(x+e_j)$:
\begin{equation}
\label{g5}
(\sigma^{x,x+e_j} \eta)(y) \;=\;
\begin{cases}
\eta (x+e_j) & \text{ if } y=x,\\
\eta (x) & \text{ if } y=x+e_j,\\
\eta (y) & \text{ otherwise},
\end{cases}
\end{equation}
and
\begin{equation*}
c_{x,x+e_j}(\eta) \;=\; 1 \;+\; b \{ \eta(x-e_j) + \eta(x+2\ e_j)\}\;,
\end{equation*}
with $b> -1/2\;$, and where all sums are modulo $N$.

We consider the Markov process  $\{\eta_t : t\ge 0\}$ on the configurations $\{0,1\}^{\bb T^d_N}$ 
associated to the generator $L_N$ in the diffusive scale, i.e., $L_N$ is speeded up by $N^2$. 

We now describe the stochastic evolution of the process. After a time given by an exponential distribution, 	
a random choice of a point $x\in \bb T^d_N$ is made. At rate $\xi_{x,x+e_j}$ the occupation variables
$\eta(x)$, $\eta(x+e_j)$ are exchanged. Note that only nearest neighbor jumps are allowed. The conductances are given by the function $W$, whereas the random environment is given by the matrix $A^N:=(a_{jj}^N(x))_{d\times d}$. The discontinuity points of $W$ may, for instance, model a membrane which obstructs the passage of particles in a fluid. For more details  see \cite{tc,v,sv}.

The effect of the factor $c_{x,x+e_j}(\eta)$ is the following: if the parameter $b$ is positive, the presence of particles in the neighboring
sites of the bond $\{x,x+e_j\}$ speeds up the exchange rate by a factor of
order one, and if the parameter $b$ is negative, the presence of particles in the neighboring sites slows down the exchange rate also by a factor of order one. More details are given in Remark \ref{taxac} below.

The dynamics informally presented describes a Markov evolution.
 A computation shows that the Bernoulli product measures
$\{\nu^N_\rho : 0\le \rho \le 1\}$ are invariant, in fact
reversible, for the dynamics. The measure $\nu^N_\rho$ is obtained
by placing a particle at each site, independently from the other
sites, with probability $\rho$. Thus, $\nu^N_\rho$ is a product
measure over $\{0,1\}^{\bb T^d_N}$ with marginals given by
\begin{equation*}
\nu^N_\rho \{\eta : \eta(x) =1\} \;=\; \rho
\end{equation*}
for $x$ in $\bb T^d_N$. 

Consider the random walk $\{X_t\}_{t\ge0}$ of a particle in $\bb T^d_N$ induced by the generator $L_N$ given as follows.
Let $\xi_{x,x+e_j}$ given by \eqref{rate}. If the particle is on a site $x\in \bb T^d_N$, it will jump to $x+e_j$ with rate $N^2\xi_{x,x+e_j}$. Furthermore, only nearest neighbor jumps are allowed.
The generator $\bb L_N$ of the random walk $\{X_t\}_{t\ge0}$ acts on functions $f:\bb T^d_N \to \bb R$ as
\begin{equation*}
\bb L_N f\left(\frac{x}{N}\right) \; =\; \sum^d_{j=1} \bb L_N^j f\left(\frac{x}{N}\right),
\end{equation*}
where,
\begin{equation*}
\bb L_N^j f\Big(\frac{x}{N}\Big) = N^2\Big\{\xi_{x,x+e_j} \Big[f\Big(\frac{x+e_j}{N}\Big) - f\Big(\frac{x}{N}\Big)\Big]  +
\xi_{x-e_j,x} \Big[f\Big(\frac{x-e_j}{N}\Big) - f\Big(\frac{x}{N}\Big)\Big]\Big\}
\end{equation*}
It is not difficult to see that the following equality holds: 
\begin{equation}
\label{opdisc}
\bb L_N f(x/N) = \sum^d_{j=1}\partial^N_{x_j}(a^N_j\partial^N_{W_j}f)(x)\;:=\;\nabla^NA^N\nabla^N_Wf(x),
\end{equation}
where, $\partial^N_{x_j}$ is the standard difference operator:
\begin{equation*}
\partial^N_{x_j}f\left(\frac xN\right)\; =\; N\left[f\left(\frac{x+e_j}{N}\right)-f\left(\frac xN\right)\right]\;,
\end{equation*}
and $\partial^N_{W_j}$ is the $W_j$-difference operator:
\begin{equation*}
\partial^N_{W_j}f\left(\frac xN\right)\;=\; \frac{f\left(\frac{x+e_j}{N}\right) -f\left(\frac xN\right)}{W\left(\frac{x+e_j}{N}\right) -W\left(\frac xN\right)},
\end{equation*}
for $x\in\bb T^d_N$.
Several properties of the above operator have been obtained in \cite{sv}.  

 The counting measure $m_N$ on $N^{-1} \bb T^d_N$ is
reversible for this process.
This random walk plays an important role in the proof of the equilibrium fluctuations of the process $\eta_t$, as we will see in subsection \ref{martprob}. 

Now we state a central limit theorem for the empirical measure, starting from an equilibrium measure $\nu_\rho$. Fix $\rho>0$ and denote by $\mc S_W(\bb T^d)$ the generalized Schwartz space on  $\bb T^d$, whose definition as well as some properties are given in Section \ref{nuclear}.

Denote by ${Y}_{\cdot}^{N}$ the \emph{density fluctuation field}, which is the bounded linear functional acting on functions $G \in \mc S_W(\bb T^d)$ as
\begin{equation}\label{densityfield}
Y_t^N(G) = \frac{1}{N^{d/2}} \sum_{x \in \bb T_N^d} G(x) [\eta_t(x) -\rho].
\end{equation}

Let $D([0,T], X)$ be the path space of
c\`adl\`ag trajectories with values in a metric space $X$. 
In this way we have defined a process in $D([0,T], \mc S_W'(\bb T^d))$, where $\mc S_W'(\bb T^d)$ 
is the topologic dual of the space $\mc S_W(\bb T^d)$.

\begin{theorem}
\label{tfeq} Consider the fluctuation field $Y_{\cdot}^N$ defined above. 
Then, $Y_{\cdot}^N$ converges weakly to the unique $\mc S_W'(\bb T^d)$-solution, $Y_t \in D([0,T],S_W'(\bb T^d))$, of the stochastic differential equation
\begin{equation}\label{spde-1}
dY_t = \phi'(\rho)\nabla A\nabla_W Y_t dt + \sqrt{2\chi(\rho)\phi'(\rho)A} dN_t,
\end{equation}
where $\chi(\rho)=\rho(1-\rho)$, $\phi(\rho)=\rho+b\rho^2$, and $\phi'$ is the derivative of $\phi$, $\phi'(\rho)=1+2b\rho$, and $N_t$ is a $\mc S_W'(\bb T^d)$-valued mean-zero martingale, with quadratic variation
$$\<N(G)\>_t = t\int_{\bb T^d} \left[\partial_{W_j}G(x)\right]^2 d(x^j\otimes W_j).$$
Furthermore, $N_t$ is a Gaussian process with independent increments. More precisely, for each $G\in S_W(\bb T^d)$, $N_t(G)$ is a time deformation of a standard Brownian motion. The process 
$Y_t$ is known in the literature as the generalized Ornstein-Uhlenbeck process with characteristics $\phi'(\rho)\nabla A\nabla_W$ and $\sqrt{2\chi(\rho)\phi'(\rho)A}\nabla_W$.
\end{theorem}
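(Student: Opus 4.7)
The plan is to carry out the standard martingale approach in four stages: Dynkin's formula, two replacement results (Boltzmann--Gibbs and homogenisation of the random discrete operator), tightness via Mitoma's criterion, and existence/uniqueness of the limiting process via the nuclear-space stochastic calculus recalled in the Appendix.

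First, for each $G\in\mc S_W(\bb T^d)$, Dynkin's formula applied to the speeded-up generator $N^2L_N$ gives the decomposition
\begin{equation*}
Y_t^N(G) \;=\; Y_0^N(G) \;+\; \int_0^t \Gamma_s^N(G)\,ds \;+\; M_t^N(G),
\end{equation*}
with $M_\cdot^N(G)$ a mean-zero martingale whose predictable quadratic variation involves the rates $\xi_{x,x+e_j}c_{x,x+e_j}(\eta_s)$ and the discrete gradients of $G$, and with $\Gamma_s^N(G)$ a functional in which the discrete $W$-Laplacian of $G$ is paired against the local nonlinearity $c_{x,x+e_j}(\eta_s)\bigl(\eta_s(x+e_j)-\eta_s(x)\bigr)$. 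This is the basic identity from which all subsequent estimates depart.

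Second, I would invoke the Boltzmann--Gibbs principle (deferred to Section \ref{s4}) to replace, in equilibrium $\nu_\rho$, this local nonlinearity by its projection onto the density field; a short computation gives $E_{\nu_\rho}[\eta(0)c_{0,e_j}(\eta)]=\phi(\rho)$, so the replacement produces the factor $\phi'(\rho)=1+2b\rho$ and turns $\Gamma_s^N(G)$ into $Y_s^N(\phi'(\rho)\bb L_N G)$ up to errors that are negligible in $L^2(\nu_\rho)$. Because $\bb L_N$ is the random discrete operator $\nabla^NA^N\nabla^N_W$ of \eqref{opdisc}, to pass to the homogenised limit $\phi'(\rho)\nabla A\nabla_W G$ one tests $Y_\cdot^N$ against the corrected function $G+N^{-1}G^N$, with $G^N$ the first-order corrector from \cite{sv}, and then uses the equivalence between corrected and uncorrected empirical fields emphasised in the introduction to transfer the limit back to $Y^N(G)$. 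A parallel equilibrium replacement in $\langle M^N(G)\rangle_t$ gives, after homogenisation, the asserted quadratic variation weighted by $2\chi(\rho)\phi'(\rho)a_j$.

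Third, tightness of $\{Y_\cdot^N\}$ in $D([0,T],\mc S_W'(\bb T^d))$ reduces by Mitoma's criterion to tightness of $\{Y_\cdot^N(G)\}$ in $D([0,T],\bb R)$ for each fixed $G$; this follows from Aldous' criterion applied to the decomposition above, with the uniform $L^2(\nu_\rho)$ bounds on $\Gamma_s^N(G)$ and on the jumps of $M^N(G)$ supplied in Section \ref{tig}. Any limit point therefore solves the martingale problem associated with \eqref{spde-1}; uniqueness of this martingale problem on $\mc S_W'(\bb T^d)$ and the identification of the law of $Y$ as the generalized Ornstein--Uhlenbeck process with the stated characteristics come from the Kallianpur--Perez-Abreu framework, while the Gaussianity of $N_t$ and the fact that $N_t(G)$ is a time-changed Brownian motion follow from the deterministic character of its quadratic variation.

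The main obstacle is the simultaneous treatment of the Boltzmann--Gibbs replacement and the homogenisation of $\bb L_N$: the nonlinearity $c_{x,x+e_j}$ calls for a local-equilibrium argument that is most naturally performed on the uncorrected field, where the test function varies slowly on the macroscopic scale, whereas the random coefficients $a_j^N$ force $\bb L_N G$ to oscillate on scale $1/N$, so that a meaningful limit of $\Gamma_s^N(G)$ only materialises after the corrector is introduced. The alternation between the corrected and uncorrected empirical measures announced in the introduction is precisely the device that keeps both steps tractable and their error terms controlled in the topology of $\mc S_W(\bb T^d)$.
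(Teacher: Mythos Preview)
Your overall architecture---Dynkin decomposition, Boltzmann--Gibbs replacement, homogenisation via a corrected test function, tightness by Mitoma/Aldous, and uniqueness through the nuclear-space framework---matches the paper's proof. The one substantive difference is the homogenisation device. You propose the classical two-scale corrector $G+N^{-1}G^N$, whereas the paper introduces instead a \emph{resolvent-corrected} test function $G_N^\lambda$, defined as the solution of $\lambda G_N^\lambda-\bb L_N G_N^\lambda=\lambda G-\nabla A\nabla_W G$, and works with the corrected field $Y_t^{N,\lambda}(G)=N^{-d/2}\sum_x G_N^\lambda(x)(\eta_t(x)-\rho)$. The advantage of the resolvent choice is purely algebraic: the defining equation gives $\bb L_N G_N^\lambda=\lambda(G_N^\lambda-G)+\nabla A\nabla_W G$ identically, so replacing $\bb L_N G_N^\lambda$ by $\nabla A\nabla_W G$ in the drift costs only the $L^2_N$ convergence $G_N^\lambda\to G$, which is precisely the convergence-of-energies statement already supplied by \cite{sv}. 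Your two-scale ansatz would instead require control of $\bb L_N(G+N^{-1}G^N)-\nabla A\nabla_W G$ in $L^2_N$, and in the random-ergodic, $W$-generalised setting the cited homogenisation results do not furnish such a corrector estimate directly; it would need separate justification. A related point of order: the paper performs the homogenisation replacement \emph{before} invoking Boltzmann--Gibbs, so that the latter is applied against the deterministic, non-oscillating test function $a_j\partial_{x_j}\partial_{W_j}G$ rather than against $\bb L_N^jG$; this keeps the BG principle within the form in which it is stated and proved in Section~\ref{s4}.
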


The proof of this theorem is given in Section \ref{ef}.

\begin{remark}\label{taxac}
The specific form of the rates $c_{x,x+e_i}$ is not important, but two
conditions must be fulfilled. The rates must be strictly positive,
they may not depend on the occupation variables $\eta(x)$, $\eta(x+e_i)$,
but they have to be chosen in such a way that the resulting process is
\emph{gradient}. (cf. Chapter 7 in \cite{kl} for the definition of
gradient processes).

We may define rates $c_{x,x+e_i}$ to obtain any polynomial $\phi$ of the
form $\phi(\alpha) = \alpha + \sum_{2\le j\le m} a_j \alpha^j$, $m\ge
1$, with $1+ \sum_{2\le j\le m} j a_j >0$. Let, for instance, $m=3$.
Then the rates
\begin{align*}
\hat c_{x,x+e_i} (\eta)\;\;& =\;\; c_{x,x+e_i} (\eta)\;\;+\\
&b\left\{ \eta(x-2e_i) \eta(x-e_i) + \eta(x-e_i) \eta(x+2e_i) + \eta(x+2e_i)
\eta(x+3e_i)\right\},
\end{align*}
satisfy the above three conditions, 
where $c_{x,x+e_i}$ is the rate defined at the beginning of Section 2 and
$a$, $b$ are such that $1+2a + 3b>0$. An elementary computation shows
that  
$\phi(\alpha) = 1 + a \alpha^2 + b \alpha^3$.
\end{remark}
\section{The space $\mc S_W(\bb T^d)$}\label{nuclear}
In this Section we build the space $\mc S_W(\bb T^d)$, which is associated to the operator $\mc L_W = \nabla \nabla_W$. This space, as we shall see, is a natural environment to attain existence and uniqueness of solutions of the stochastic differential equation \eqref{spde-1}. Furthermore, several lemmas are obtained to fulfill the conditions to ensure existence and uniqueness of such solutions.

\subsection{The operator $\mc L_W$}\label{lw}

Consider the operator $\mc L_{W_k}: \mc D_{W_k}\subset L^2(\bb T) \rightarrow \bb R$ given by
\begin{equation}
\label{f008}
\mc L_{W_k}f\;=\;  \partial_{x_k} \, \partial_{W_k} \, f,\;
\end{equation}
whose domain $\mc D_{W_k}$ is completely characterized in the following proposition:
\begin{proposition}
\label{dominiodw}
The domain $\mc D_{W_k}$ consists of all functions $f$ in $L^2(\bb T)$ such
that
\begin{equation*}
f(x) \;=\; a \;+\; b W_k(x) \;+\; \int_{(0,x]} W_k(dy) \int_0^y \mf f(z) \, dz
\end{equation*}
for some function $\mf f$ in $L^2(\bb T)$ that satisfies
\begin{equation*}
\int_0^1 \mf f(z) \, dz \;=\; 0\quad\hbox{~and~} \quad
\int_{(0,1]} W_k(dy) \Big\{ b + \int_0^y \mf f(z) \, dz \Big\} \;=\;0\; .
\end{equation*}
\end{proposition}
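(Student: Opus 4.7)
The plan is to prove both directions by inverting the composition $\mc L_{W_k} = \partial_{x_k}\partial_{W_k}$ through successive integrations, first in the Lebesgue sense in $x_k$ and then in the Lebesgue--Stieltjes sense against $W_k$, while tracking the two periodicity constraints that are packaged in the definition of $\mc D_{W_k}$.

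For the forward implication, suppose $f \in \mc D_{W_k}$ and set $\mf f := \mc L_{W_k} f \in L^2(\bb T)$. By definition $g := \partial_{W_k} f$ exists a.e.\ and its classical derivative in $x_k$ equals $\mf f$, hence $g$ is absolutely continuous with
$$g(y) \;=\; b \;+\; \int_0^y \mf f(z)\, dz, \qquad b := g(0).$$
Periodicity of $g$ on $\bb T$ then forces $\int_0^1 \mf f(z)\, dz = 0$, which is the first constraint. Next I recover $f$ from $g$: from the definition \eqref{f004}, and the fact that $g$ is continuous and $W_k$ has bounded variation on $[0,1]$, one shows that $f(x) - f(0) = \int_{(0,x]} g(y)\, W_k(dy)$ as a Lebesgue--Stieltjes integral. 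Substituting the formula for $g$ and setting $a := f(0)$ (and absorbing $-bW_k(0)$ into $a$, using $\int_{(0,x]} W_k(dy) = W_k(x) - W_k(0)$) gives exactly the claimed representation. Finally, periodicity $f(1) = f(0)$ becomes $\int_{(0,1]} W_k(dy)\{b + \int_0^y \mf f\} = 0$, the second constraint.

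For the reverse implication, given $\mf f \in L^2(\bb T)$ with $\int_0^1 \mf f = 0$ and the second stated condition, define $f$ by the formula. I verify in order: (i)~$f$ is periodic on $\bb T$ (exactly by the second condition); (ii)~$f \in L^2(\bb T)$, since the Stieltjes integrand $b + \int_0^{y}\mf f$ is bounded on $[0,1]$ and $W_k$ has finite total variation there; (iii)~the limit in \eqref{f004} exists at every $y$ and equals $g(y) := b + \int_0^y \mf f(z)\, dz$. At points where $W_k$ is continuous, (iii) is the classical fundamental theorem for Lebesgue--Stieltjes integrals applied to the continuous integrand $g$; at a jump point $y_0$ of $W_k$ it follows from the identity $f(y_0+\epsilon) - f(y_0) = g(y_0)[W_k(y_0+\epsilon) - W_k(y_0)] + o(W_k(y_0+\epsilon) - W_k(y_0))$, which one obtains by splitting the Stieltjes integral into its jump at $y_0$ and an absolutely continuous remainder. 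Differentiating $g$ in the classical sense returns $\mf f \in L^2(\bb T)$, so $f \in \mc D_{W_k}$ and $\mc L_{W_k} f = \mf f$.

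The main technical point, which I would treat carefully, is exactly this pointwise inversion at the jumps of $W_k$: one must justify not only that the Stieltjes formula for $f$ followed by $\partial_{W_k}$ reproduces $g$ almost everywhere but actually \emph{at every} point of $\bb T$, because the two integral constraints are pointwise identities at $y = 1$, and because the subsequent derivative $\partial_{x_k}$ must land in $L^2$. Everything else reduces to the fundamental theorem of calculus and Fubini, matching the two vanishing-integral conditions with periodicity of $g$ and $f$, respectively.
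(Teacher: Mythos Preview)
The paper does not prove this proposition; immediately after stating it, the authors write ``The proof of Proposition~\ref{dominiodw} and further details can be found in \cite{tc}.'' So there is no argument here to compare against. Your outline is the natural one and is presumably what \cite{tc} does: integrate $\mf f$ in $x_k$ to recover $g=\partial_{W_k}f$, then integrate $g$ against $dW_k$ to recover $f$, and identify the two integral constraints with periodicity of $g$ and of $f$ respectively.

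Two small points worth tightening. First, in the forward direction you go from ``$\partial_{x_k} g=\mf f\in L^2$'' to ``$g(y)=b+\int_0^y\mf f$''; this step needs $g$ absolutely continuous, not merely differentiable a.e.\ with $L^2$ derivative. In \cite{tc} the domain is set up so that this is built in, so be sure you are invoking the correct definition of $\mc D_{W_k}$ rather than the bare pointwise one suggested by~\eqref{f004}--\eqref{f008}. Second, since $W_k$ is c\`adl\`ag and strictly increasing, its jumps sit on the \emph{left}: $W_k(y_0^-)<W_k(y_0)=W_k(y_0^+)$. Hence at a jump point the delicate one-sided limit in \eqref{f004} is the left one, and your splitting ``$f(y_0+\epsilon)-f(y_0)=g(y_0)[W_k(y_0+\epsilon)-W_k(y_0)]+o(\cdots)$'' should be rewritten for $\epsilon\to 0^-$; the conclusion $\partial_{W_k}f(y_0)=g(y_0)$ is unchanged.
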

The proof of Proposition \ref{dominiodw} and further details can be found in \cite{tc}.  Furthermore, they also proved that these operators have a  countable complete orthonormal system of eigenvectors, which we denote by $\mc A_{W_k}$. Then, following \cite{v},
$$\mc A_W\;=\;\{f: \bb T^d \rightarrow \bb R;f(x_1,\ldots , x_d)=\prod^{d}_{k=1}{ f_k(x_k)}, f_k \in  \mc A_{W_k } \},$$
where $W$ is given by \eqref{w}. 

We may now build an operator analogous to $\mc L_{W_k}$ in $\bb T^d$. For a given set $\mc A$, we denote by $span(\mc A)$ the linear space generated by $\mc A$. Let $\bb D_W = span(\mc A_W)$, and define the operator $\bb L_W:\bb D_W \to L^2(\bb T^d)$ as follows: for $f =\prod^d_{k=1}{f_k}\in \mc A_W$,
\begin{equation}
\label{eq31}
\bb L_W(f)(x_1,\ldots x_d)=\sum^{d}_{k =1} \prod^{d}_{j=1, j\neq k}
{f_j(x_j)}\mc L_{W_k}f_k(x_k),
\end{equation}
and extend to $\bb D_W$ by linearity. It is easy to see that if $f\in \bb D_W$,
\begin{equation}
\label{f002}
\bb L_W f=\sum^{d}_{k =1}\mc L_{W_k}f,
\end{equation}
where the application of $\mc L_{W_k}$ on a function $f:\bb T^d\to\bb R$ is the natural one, i.e., it considers $f$ only as a function of the $k$th coordinate, and keeps all the remaining coordinates fixed. 

Let, for each $k=1,\ldots,d$, $f_k\in \mc A_{W_k}$ be an eigenvector
of $\mc L_{W_k}$ associated to the eigenvalue $\lambda _k$. Then $f=\prod^d_{k=1}{f_k}$ belongs to $\bb D_W$ and is an eigenvector of
 $\bb L_W$ with eigenvalue $\sum^d_{k=1}{\lambda _k}$. Moreover, \cite{v} proved the following result: 
\begin{lemma}
\label{f17}
The following statements hold:

\renewcommand{\theenumi}{\alph{enumi}}
\renewcommand{\labelenumi}{{\rm (\theenumi)}}

\begin{enumerate}
\item The set $\bb D_W$ is dense in $L^2(\bb T^d)$;
\item The operator $\bb L_W : \bb D_W \to L^2(\bb T^d)$ is symmetric and
  non-positive:
\begin{eqnarray*}
\< -\bb L_W f , f\> \;\ge\; 0,
\end{eqnarray*}
where $\<\cdot,\cdot\>$ is the standard inner product in $L^2(\bb T^d)$.
\end{enumerate}

\end{lemma}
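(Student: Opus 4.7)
My plan is to reduce both assertions to the one-dimensional facts already known about the operators $\mc L_{W_k}$ (namely, that each $\mc A_{W_k}$ is a complete orthonormal system of $L^2(\bb T)$ consisting of eigenvectors of $\mc L_{W_k}$ with non-positive eigenvalues) by exploiting the tensor-product structure of $\mc A_W$ and $\bb L_W$.

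For part (a), I would argue that $\mc A_W$ is a complete orthonormal system of $L^2(\bb T^d)$. Orthonormality of the product eigenfunctions $\prod_{k=1}^d f_k(x_k)$ follows from Fubini and the one-dimensional orthonormality of each $\mc A_{W_k}$. Completeness follows from the standard tensor-product argument: a function in $L^2(\bb T^d)$ orthogonal to every product $\prod_k f_k$ must, after iteratively testing one coordinate at a time against $\mc A_{W_1},\ldots,\mc A_{W_d}$, vanish almost everywhere. Since $\bb D_W=\mathrm{span}(\mc A_W)$ contains a complete orthonormal system, it is dense in $L^2(\bb T^d)$.

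For part (b), the cleanest route is to verify both properties first on pure tensors and then extend by bilinearity, since $\bb D_W$ consists of finite linear combinations of elements of $\mc A_W$. Take $f=\prod_k f_k$ and $g=\prod_k g_k$ with $f_k,g_k\in\mc A_{W_k}$. Using formula \eqref{eq31} and Fubini,
\begin{equation*}
\<\bb L_W f,g\>\;=\;\sum_{k=1}^{d}\Bigl(\prod_{j\neq k}\<f_j,g_j\>_{L^2(\bb T)}\Bigr)\<\mc L_{W_k}f_k,g_k\>_{L^2(\bb T)}.
\end{equation*}
Applying the symmetry of each one-dimensional operator $\mc L_{W_k}$ (known from \cite{tc}) to each summand gives $\<\bb L_W f,g\>=\<f,\bb L_W g\>$, and bilinearity extends this to all of $\bb D_W$. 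For non-positivity, I would use the eigenbasis: any $f\in\bb D_W$ is a \emph{finite} linear combination $f=\sum_\alpha c_\alpha e_\alpha$ of elements $e_\alpha\in\mc A_W$, and each such $e_\alpha$ is an eigenvector of $\bb L_W$ with eigenvalue $\sum_{k=1}^d\lambda_k^{(\alpha)}\le 0$ (since the one-dimensional eigenvalues are non-positive, which one sees from the integration-by-parts identity $\<-\mc L_{W_k}f_k,f_k\>=\int(\partial_{W_k}f_k)^2\,W_k(dx_k)\ge 0$, the boundary terms disappearing by periodicity of $W_k$). Orthonormality of $\mc A_W$ then yields
\begin{equation*}
\<-\bb L_W f,f\>\;=\;\sum_\alpha\Bigl(-\sum_{k=1}^d\lambda_k^{(\alpha)}\Bigr)|c_\alpha|^2\;\ge\;0.
\end{equation*}

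The only subtle point I anticipate is the one-dimensional non-positivity/symmetry input: making sure the integration-by-parts on the torus with the (discontinuous but periodic) measure $W_k(dx_k)$ produces no boundary contribution. This is handled by the periodicity condition $W_k(u+1)-W_k(u)=W_k(1)-W_k(0)$ together with the characterization of $\mc D_{W_k}$ in Proposition \ref{dominiodw}; since this is essentially cited from \cite{tc}, the remainder of the proof is a routine tensorization.
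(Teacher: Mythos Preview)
Your argument is correct and is precisely the natural tensorization one would expect. Note, however, that the paper does not actually supply a proof of this lemma: it merely attributes the result to \cite{v} and moves on. So there is nothing to compare against in the paper itself; your write-up effectively fills in the omitted proof. The only remark is that your symmetry argument, as stated, checks $\<\bb L_W f,g\>=\<f,\bb L_W g\>$ for $f,g$ pure tensors in $\mc A_W$ and then extends by bilinearity---this is fine because $\bb D_W=\mathrm{span}(\mc A_W)$, but you could equally well bypass the one-dimensional symmetry of $\mc L_{W_k}$ entirely and get both symmetry and non-positivity in one stroke from the eigenvector property, exactly as you do for non-positivity.
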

Also, the set $\mc A_W$ forms a complete, orthonormal, countable system
of eigenvectors for the operator $\bb L_W$. Let
$\mc A_W= \{ \varphi_j\}_{ j\ge 1}$, $\{ \alpha_j\}_{ j\ge 1} $ be the corresponding eigenvalues of $-\bb L_W$, 
and consider $\mc D_W = \{v=\sum^{\infty}_{j=1}{v_j\varphi_j}\in L^2(\bb T^d);
\sum^{\infty}_{j=1}{v^2_j\alpha^2_j}<+\infty \}$. We
define the operator $\mc L_W:\mc D_W\to L^2(\bb T^d)$ by
\begin{equation}
-\mc L_Wv = \sum^{+\infty}_{j=1}{\alpha_{j}v_{j}\varphi_{j}}
\end{equation}

The operator $\mc L_W$ is clearly an extension of the operator $\bb L_W$,
and we present in Proposition \ref{t01} some properties of this operator.

\begin{proposition}
\label{t01}
The operator $\mc L_W : \mc D_W \to L^2(\bb T^d)$ enjoys the
following properties:

\renewcommand{\theenumi}{\alph{enumi}}
\renewcommand{\labelenumi}{{\rm (\theenumi)}}

\begin{enumerate}
\item  The domain $\mc D_W$ is
 dense in $L^2(\bb T^d)$. In particular, the set of eigenvectors
$\mc A_W=\{\varphi_j\}_{j\ge 1}$ forms a complete orthonormal system;
\item The eigenvalues of the operator $- \mc L_W$ form a countable set
  $\{\alpha_j\}_{ j\ge 1}$. 
  All eigenvalues have finite multiplicity,
 and it is   possible to obtain a re-enumeration $\{\alpha_j\}_{j\ge 1}$ 
 such that
$$0= \alpha_1 \le \alpha_2 \le \cdots \;\;\;\text{and}\;\; \lim_{n\to\infty} \alpha_n  = \infty;$$
\item The operator $\bb I - \mc L_W : \mc D_W \to L^2(\bb T^d)$ is  bijective;
\item $\mc L_W: \mc D_W \to L^2(\bb T^d)$ is self-adjoint and
  non-positive:
\begin{eqnarray*}
\< -\mc L_W f , f\> \;\ge\; 0;
\end{eqnarray*}

\item $\mc L_W$ is dissipative.

\end{enumerate}
\end{proposition}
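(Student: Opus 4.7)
The plan is to read off each item of Proposition~\ref{t01} from the spectral representation $-\mc L_W v = \sum_j \alpha_j v_j \varphi_j$ that is built into the very definition of $\mc L_W$. Once the completeness of the orthonormal system $\{\varphi_j\}$ and the nonnegativity of the $\alpha_j$'s (Lemma~\ref{f17}) are in hand, the whole proposition amounts to diagonal computations on Fourier-type coefficients.

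For (a) I would observe that $\bb D_W\subset \mc D_W$, since any finite linear combination of the $\varphi_j$ trivially satisfies $\sum v_j^2\alpha_j^2<\infty$; density of $\mc D_W$ in $L^2(\bb T^d)$ then follows from Lemma~\ref{f17}(a), and the completeness of $\mc A_W$ is already cited from \cite{v}. For (b) the essential input is the tensor-product structure provided by \eqref{eq31}: every eigenvalue of $-\bb L_W$ has the form $\sum_{k=1}^d \lambda_k$ with $\lambda_k\ge 0$ an eigenvalue of $-\mc L_{W_k}$. Since each one-dimensional spectrum is discrete, has finite multiplicities, and accumulates only at $+\infty$ (which I would import from \cite{tc}), for any $C>0$ the set of $d$-tuples with $\sum \lambda_k\le C$ is finite. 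This simultaneously yields finite multiplicity for every $\alpha_j$ and forces $\alpha_n\to\infty$; this is the step I expect to be the most delicate, not because the combinatorics is hard, but because the one-dimensional multiplicity and compactness statements are only implicit in the preceding excerpt.

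For (c), given $g=\sum g_j\varphi_j\in L^2(\bb T^d)$, I would set $v_j:=g_j/(1+\alpha_j)$. Nonnegativity of $\alpha_j$ gives $|v_j|\le|g_j|$ and $\alpha_j|v_j|\le|g_j|$, so $v\in \mc D_W$ and $(\bb I-\mc L_W)v=g$; injectivity is immediate because $(1+\alpha_j)v_j=0$ forces $v_j=0$ for every $j$.

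Items (d) and (e) are then essentially automatic. The identity $\<-\mc L_W v,w\>=\sum_j \alpha_j v_j w_j$ on $\mc D_W\times\mc D_W$ is symmetric in $(v,w)$ and nonnegative when $v=w$, which gives symmetry and non-positivity. To upgrade symmetry to self-adjointness I would invoke the standard criterion that a densely defined symmetric operator $A$ with $\mathrm{ran}(\bb I-A)=L^2(\bb T^d)$ is self-adjoint; part (c) supplies exactly that range condition. Finally, for dissipativity, for $\lambda>0$ the inequality $\lambda\|v\|^2\le \<(\lambda\bb I-\mc L_W)v,v\>$ combined with Cauchy--Schwarz yields $\|(\lambda\bb I-\mc L_W)v\|\ge \lambda\|v\|$, which is the contractivity estimate defining a dissipative operator.
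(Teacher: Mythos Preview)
The paper does not actually supply a proof of Proposition~\ref{t01}: after stating the proposition it moves directly to the next subsection on nuclear spaces, treating the result as known (the surrounding material is attributed to \cite{tc} and \cite{v}). So there is no argument in the paper to compare your proposal against.

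Your proposal itself is correct and is the natural way to establish the proposition once the spectral definition of $\mc L_W$ is in place. Each step reduces to a diagonal computation in the basis $\{\varphi_j\}$, and the only external inputs you flag---completeness of $\mc A_W$ (Lemma~\ref{f17} and the paragraph following it) and the one-dimensional spectral facts for $-\mc L_{W_k}$ from \cite{tc}---are exactly what the paper already invokes. In particular your route to self-adjointness in (d), using surjectivity of $\bb I-\mc L_W$ from (c) together with symmetry, is the clean standard argument and avoids any delicate domain chasing. If anything, your write-up is more explicit than what the paper provides; it could serve as the missing proof.
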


\subsection{Nuclear spaces $\mc S_W(\bb T^d)$}
Our goal is to build a Countably Hilbert Nuclear space associated the self-adjoint operator $\mc L_W$. The reader is referred to Appendix.

Let $\{\varphi_j\}_{j\ge1}$ be the complete orthonormal set of the eigenvectors (in $L^2(\bb T^d)$) of the operator $\mc L = \bb I -\mc L_W$, and $\{\lambda_j\}_{j\ge1}$ the associated eigenvalues. Note that $\lambda_j = 1+\alpha_j$.

Consider the following increasing sequence $\|\cdot \|_n$, $n\in \bb N$, of Hilbertian norms:
$$\<f,g\>_n = \sum_{k=1}^\infty\<\bb P_kf, \bb P_kg\>\lambda^{2n}_k k^{2n},$$
where we denote by $\bb P_k$ the orthogonal projection on the linear space generated by the eigenvector $\varphi_k$.

So,

$$\|f\|_n^2 = \sum_{k=1}^\infty\|\bb P_kf\|^2\lambda^{2n}_k k^{2n},$$
where $\|\cdot\|$ is the $L^2(\bb T^d)$ norm.

Consider the Hilbert spaces $\mc S_n$ which are obtained by completing the space $\bb D_W$ with respect to the inner product $\<\cdot,\cdot\>_n$.

The set 
$$\mc S_W(\bb T^d) = \bigcap_{n=0}^\infty\mc S_n$$
endowed with the metric \eqref{metrica} is our \textit{countably Hilbert space}, and even more, it 
is a countably Hilbert nuclear space, see the Appendix for further details. In fact, fix $n\in \bb N$, and let $m >n+1/2$. We have that $\{\frac1{{(j\lambda_j)}^{m}}\varphi_j\}_{j\ge1}$ is a complete orthonormal set in $\mc S_m$. Therefore,
$$\sum^\infty_{j=1}\|\frac1{{(j\lambda_j)}^{m}}\varphi_j\|_n^2 \le \sum^\infty_{j=1}\frac {1}{j^{2(m-n)}}\; < \infty.$$

\begin{lemma}\label{semi}
Let $\mc L_W: \mc D_W\to L^2(\bb T^d)$ be the operator obtained in theorem \ref{t01}. We have 
\renewcommand{\theenumi}{\alph{enumi}}
\renewcommand{\labelenumi}{{\rm (\theenumi)}}

\begin{enumerate}
\item $\mc L_W$ is the generator of a strongly continuous contraction semigroup $\{P_t:L^2(\bb T^d) \to L^2(\bb T^d)\}_{t\ge0}$;
\item $\mc L_W$  is a closed operator;
\item For each $f\in L^2(\bb T^d)$, $t\mapsto P_tf$ is a continuous function from $[0,\infty)$ to $L^2(\bb T^d)$;
\item $\mc L_WP_tf\;=\;P_t\mc L_Wf$ for each  $f\in \mc L_W$ and $t\ge0$;
\item $(\bb I - \mc L_W)^nP_tf\;=\;P_t(\bb I - \mc L_W)^nf$ for each  $f\in \bb D_W$,$t \ge0$ and $n\in\bb N$;
\end{enumerate}
\end{lemma}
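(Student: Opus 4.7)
The plan is to build the semigroup explicitly from the spectral decomposition provided by Proposition \ref{t01}. Since $\{\varphi_j\}_{j\ge 1}$ is a complete orthonormal system of eigenvectors of $-\mc L_W$ with non-negative eigenvalues $\alpha_j$, I would define, for $f = \sum_j v_j \varphi_j \in L^2(\bb T^d)$,
$$P_t f \;:=\; \sum_{j\ge 1} e^{-\alpha_j t} v_j \varphi_j, \qquad t \ge 0.$$
The uniform bound $|e^{-\alpha_j t}| \le 1$ immediately yields that each $P_t$ is a contraction on $L^2(\bb T^d)$, and the semigroup identities $P_{t+s} = P_t P_s$, $P_0 = \bb I$, follow from $e^{-\alpha_j(t+s)} = e^{-\alpha_j t} e^{-\alpha_j s}$. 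Alternatively, since Proposition \ref{t01} already supplies density of $\mc D_W$, dissipativity of $\mc L_W$, and surjectivity of $\bb I - \mc L_W$, one could simply invoke the Lumer--Phillips theorem to produce the semigroup abstractly.

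For strong continuity I would fix $f \in L^2(\bb T^d)$, expand $\|P_t f - f\|^2 = \sum_j (1 - e^{-\alpha_j t})^2 v_j^2$, and conclude by dominated convergence. To verify that $\mc L_W$ is the generator, I would take $f \in \mc D_W$ (so that $\sum_j \alpha_j^2 v_j^2 < \infty$) and show that $(P_t f - f)/t \to \mc L_W f$ in $L^2$ by the same dominated-convergence argument applied termwise to $\sum_j ((e^{-\alpha_j t}-1)/t + \alpha_j)^2 v_j^2$. This establishes (a) and (c) in one sweep, and (b) is then immediate because every generator of a $C_0$-semigroup is closed.

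For (d) I would observe that if $f = \sum_j v_j\varphi_j \in \mc D_W$, then both $P_t \mc L_W f$ and $\mc L_W P_t f$ reduce to the series $-\sum_j \alpha_j e^{-\alpha_j t} v_j \varphi_j$, whose $L^2$-convergence is guaranteed by $\sum_j \alpha_j^2 v_j^2 < \infty$ and $e^{-2\alpha_j t} \le 1$. For (e) I would exploit the fact that $\bb D_W = \mathrm{span}(\mc A_W)$ consists of \emph{finite} linear combinations of eigenvectors; thus for $f = \sum_{j \in F} v_j \varphi_j$ with $F$ finite, both sides collapse to the finite sum $\sum_{j \in F}(1+\alpha_j)^n e^{-\alpha_j t} v_j \varphi_j$ by the eigenvalue identity $(\bb I - \mc L_W)\varphi_j = (1+\alpha_j)\varphi_j$.

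I do not expect any serious obstacle: once the spectral representation is in hand, the proof is essentially mechanical. The only point that demands care is the interchange of summation with differentiation and with the (unbounded) application of $\mc L_W$ in the spectral series, and each such exchange is controlled either by the finite support of the expansion (for $f \in \bb D_W$) or by the summability $\sum_j \alpha_j^2 v_j^2 < \infty$ combined with $e^{-\alpha_j t} \le 1$ (for $f \in \mc D_W$), both of which reduce to routine applications of dominated convergence.
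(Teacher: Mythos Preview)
Your proposal is correct and complete in substance. The paper takes a more abstract route: it invokes the Hille--Yosida theorem directly from the properties in Proposition~\ref{t01} (density of $\mc D_W$, self-adjointness and non-positivity, bijectivity of $\bb I - \mc L_W$) to obtain (a), then cites standard semigroup theory for (b)--(d), and derives (e) from (d) by iteration on $\bb D_W$. You instead construct $P_t$ explicitly from the spectral decomposition and verify each property by hand; this is more elementary and self-contained, and in fact the paper itself uses exactly your spectral formula $P_t f = \sum_j \beta_j e^{-\alpha_j t}\varphi_j$ in the very next lemma (Lemma~\ref{c01}), so your approach dovetails naturally with what follows. The one point you leave implicit is the \emph{reverse} inclusion in identifying the generator: you show $\mc D_W$ is contained in the domain of the generator of $\{P_t\}$, but to conclude equality you should note either that $\bb I - \mc L_W$ is already surjective on $L^2$ (Proposition~\ref{t01}(c)), forcing maximality, or simply fall back on the Lumer--Phillips argument you mention, which handles the domain identification automatically.
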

\begin{proof}
In view of (a), (b) and (d) in Theorem \ref{t01}, we may use Hille-Yosida Theorem to conclude the item (a) of the Lemma.
By item (a) proved, is easy to conclude items (b), (c) and (d) see, for instance, \cite[chapter 1]{ek}.
Considering  that $\mc L_Wf = \bb L_Wf$ for  $f\in \bb D_W$, and the application successive of the operator $\mc L_W$ is permited in this space. By item (d) follows item (e).
\end{proof}

The next Lemma permits conclude that the semigroup $\{P_t:t\ge0\}$ acting on the domain $\mc S_W(\bb T^d)$  is a $C_{0,1}$-semigroup.
\begin{lemma}\label{c01}
Let $\{P_t:t\ge0\}$ the semigroup whose infinitesimal generator is $\mc L_W$. Then for each $q\in \bb N$ we have:
\begin{equation*}
\|P_tf\|_q\le \|f\|_{q},
\end{equation*}
for all $f\in\mc S_W(\bb T^d)$. In particular, $\{P_t:t\ge0\}$ is a $C_{0,1}$-semigroup.
\end{lemma}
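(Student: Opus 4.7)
The plan is to exploit the spectral decomposition from Proposition \ref{t01}, which says the eigenvectors $\{\varphi_j\}_{j\ge1}$ of $\mc L_W$ form a complete orthonormal system in $L^2(\bb T^d)$, with $\mc L_W \varphi_j = -\alpha_j\varphi_j$ and $0 = \alpha_1\le \alpha_2\le\cdots$. The norms $\|\cdot\|_q$ are built out of exactly this basis via $\|f\|_q^2 = \sum_j \|\bb P_j f\|^2 \lambda_j^{2q} j^{2q}$, where $\lambda_j = 1+\alpha_j$, so the whole statement should reduce to a termwise inequality.

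First I would observe that since $\mc L_W$ is self-adjoint and non-positive (Proposition \ref{t01}(d)), the semigroup $P_t$ supplied by Lemma \ref{semi}(a) diagonalizes in the basis $\{\varphi_j\}$:
\begin{equation*}
P_t\varphi_j \;=\; e^{-\alpha_j t}\varphi_j, \qquad t\ge 0,\ j\ge 1.
\end{equation*}
This can be verified directly by checking that $e^{-\alpha_j t}\varphi_j$ satisfies the abstract Cauchy problem $\frac{d}{dt}u=\mc L_W u$ with initial condition $\varphi_j$, using Lemma \ref{semi}(d) and uniqueness from Hille--Yosida. Consequently, for any $f\in L^2(\bb T^d)$, the projections satisfy $\bb P_j P_t f = e^{-\alpha_j t}\bb P_j f$.

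Next I substitute this into the definition of $\|\cdot\|_q$: for $f\in\mc S_W(\bb T^d)\subset L^2(\bb T^d)$,
\begin{equation*}
\|P_t f\|_q^2 \;=\; \sum_{j=1}^\infty \|\bb P_j P_t f\|^2\,\lambda_j^{2q} j^{2q} \;=\; \sum_{j=1}^\infty e^{-2\alpha_j t}\|\bb P_j f\|^2\,\lambda_j^{2q} j^{2q} \;\le\; \|f\|_q^2,
\end{equation*}
the last inequality because $e^{-2\alpha_j t}\le 1$ for all $j$ since $\alpha_j\ge0$. This gives the contraction in every norm, hence $P_t$ preserves $\mc S_W(\bb T^d)$.

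To finish the $C_{0,1}$ claim it remains to upgrade continuity in $L^2$ (Lemma \ref{semi}(c)) to continuity in every $\|\cdot\|_q$: given $f\in\mc S_W(\bb T^d)$, write
\begin{equation*}
\|P_t f - f\|_q^2 \;=\; \sum_{j=1}^\infty (1-e^{-\alpha_j t})^2\,\|\bb P_j f\|^2\,\lambda_j^{2q} j^{2q},
\end{equation*}
note that each summand is dominated by $\|\bb P_j f\|^2\lambda_j^{2q}j^{2q}$, which is summable to $\|f\|_q^2<\infty$, and each summand tends to $0$ as $t\to 0^+$. Dominated convergence for counting measure then gives $\|P_t f - f\|_q\to 0$. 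Since no obstacle is truly serious here, the main thing to be careful about is the spectral identification $P_t\varphi_j=e^{-\alpha_j t}\varphi_j$; everything after it is routine bookkeeping with the eigenfunction expansion.
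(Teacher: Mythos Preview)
Your argument is correct and follows essentially the same route as the paper: diagonalize $P_t$ in the eigenbasis $\{\varphi_j\}$ to get $P_t\varphi_j=e^{-\alpha_jt}\varphi_j$ and then read off the contraction in each $\|\cdot\|_q$ from $e^{-\alpha_jt}\le1$. The only differences are cosmetic---the paper first proves the bound for finite combinations $f\in\bb D_W$ and extends by density, whereas you work with the full expansion at once---and you additionally verify the strong continuity $t\mapsto P_tf$ in every $\|\cdot\|_q$ via dominated convergence, a point the paper leaves implicit but which is indeed needed for the $C_{0,1}$ property.
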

\begin{proof}
Let $f\in \bb D_W$, then
$$f = \sum_{j=1}^k \beta_j \varphi_{j},$$
for some $k\in\bb N$, and some constants $\beta_1,\ldots,\beta_k$. Using Hille-Yosida's theorem, a simple calculation shows that
$$P_t f = \sum_{j=1}^k \beta_j e^{t(1-\lambda_j)}\varphi_{j}.$$
Therefore, for $f\in\bb D_W$:
\begin{eqnarray*}
\|P_t f\|_n^2 &=& \|\sum_{j=1}^k\beta_je^{t(1-\lambda_j)}\varphi_{j}\|_n\\
&=&\sum_{j=1}^k\|\beta_j e^{t(1-\lambda_j)}\varphi_{j}\|^2 \lambda_j^{2n} j^{2n}\\
&\leq& \sum_{j=1}^k \|\beta_j \varphi_{j}\|^2 \lambda_j^{2n} j^{2n} = \|f\|_n^2
\end{eqnarray*}
We conclude the lemma by using the density of $\bb D_W$ in $\mc S_W(\bb T^d)$.
\end{proof}

\begin{lemma}\label{continua}
The operator $\mc L_W$ belongs to $\mc L(\mc S_W(\bb T^d),\mc S_W(\bb T^d))$, the space of linear continuous operators from $\mc S_W(\bb T^d)$ into $\mc S_W(\bb T^d)$.
\end{lemma}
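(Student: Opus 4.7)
The plan is to prove continuity of $\mc L_W$ between the countably Hilbert spaces by establishing, for each $n \ge 0$, a single estimate of the form $\|\mc L_W f\|_n \le C_n \|f\|_{m(n)}$. By the projective-limit definition of the topology on $\mc S_W(\bb T^d)$, such an estimate simultaneously yields both that $\mc L_W$ maps $\mc S_W(\bb T^d)$ into itself (since $f\in\mc S_W(\bb T^d)$ means $\|f\|_q<\infty$ for every $q$) and that the induced map is continuous.

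First I would argue on the dense subspace $\bb D_W = \mathrm{span}(\mc A_W)$, where every element is a finite linear combination $f=\sum_j \beta_j\varphi_j$. Since each $\varphi_j$ is an eigenvector of $\mc L_W$ with eigenvalue $-\alpha_j = 1-\lambda_j$, direct expansion in the orthonormal system gives
$$\|\mc L_W f\|_n^2 \;=\; \sum_{j\ge 1} |\beta_j|^2 (1-\lambda_j)^2 \lambda_j^{2n} j^{2n}.$$
The key spectral estimate is $|1-\lambda_j|=\alpha_j \le 1+\alpha_j = \lambda_j$, which is immediate from Proposition \ref{t01}(b), where we already have $\alpha_j\ge 0$. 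Using this, together with the trivial bound $j^{2n}\le j^{2(n+1)}$, I obtain
$$\|\mc L_W f\|_n^2 \;\le\; \sum_{j\ge 1}|\beta_j|^2\, \lambda_j^{2(n+1)} j^{2(n+1)} \;=\; \|f\|_{n+1}^2.$$

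Next I would extend this bound from $\bb D_W$ to all of $\mc S_W(\bb T^d)$ by density: for $f\in\mc S_W(\bb T^d)\subset \mc S_{n+1}$, choose a sequence $(f^{(N)})\subset\bb D_W$ with $f^{(N)}\to f$ in $\|\cdot\|_{n+1}$; the estimate applied to $f^{(N)}-f^{(M)}$ shows $(\mc L_W f^{(N)})$ is Cauchy in $\mc S_n$, and the limit (which agrees with $\mc L_W f$ computed spectrally) satisfies $\|\mc L_W f\|_n\le\|f\|_{n+1}$. Running this for every $n$ shows $\mc L_W f\in\bigcap_n \mc S_n = \mc S_W(\bb T^d)$ and produces the continuity bound.

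There is essentially no serious obstacle. The only care point is the index shift $n\mapsto n+1$ absorbing both the $\alpha_j$ factor from $\mc L_W$ acting on $\varphi_j$ and the combinatorial $j$-weight in the definition of $\|\cdot\|_n$; the design of the norms with the tandem weight $\lambda_j^{2n}j^{2n}$ is precisely what makes this absorption possible, so the verification reduces to the elementary spectral inequality above.
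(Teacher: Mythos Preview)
Your proof is correct and follows essentially the same spectral-expansion route as the paper: both compute $\|\mc L_W f\|_n^2$ via the eigenbasis and absorb the factor $(1-\lambda_j)^2$ into a shift $n\mapsto n+1$ of the norm index. Your elementary inequality $|1-\lambda_j|=\alpha_j\le\lambda_j$ yields the slightly cleaner bound $\|\mc L_W f\|_n\le\|f\|_{n+1}$, whereas the paper uses $(1-\lambda_j)^2\le 2(1+\lambda_j^2)$ to obtain $\|\mc L_W f\|_n^2\le 2(\|f\|_n^2+\|f\|_{n+1}^2)$; the conclusion is the same.
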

\begin{proof}
Let $f\in \mc S_W(\bb T^d)$, and $\{\varphi_j\}_{j\ge 1}$ be the complete orthonormal set of eigenvectors of $\mc L_W$, with $\{(1-\lambda_j)\}_{j\ge 1}$ being their respectively eigenvalues. We have that
$$f = \sum_{j=1}^\infty \beta_j \varphi_j,\quad\hbox{with}\quad\sum_{j=1}^\infty \beta_j^2 <\infty.$$
We also have that
$$\mc L_W f = \sum_{j=1}^\infty (1-\lambda_j)\beta_j\varphi_j.$$
For every $n\in\bb N$:
\begin{eqnarray*}
\|\mc L_Wf\|_n^2 &=&\sum_{k=1}^\infty \|\bb P_k(\mc L_W f)\|^2 \lambda_k^{2n} k^{2n}= \sum_{k=1}^\infty \|\beta_k(1-\lambda_k)\varphi_k\|^2 \lambda_k^{2n} k^{2n}\\
&=& \sum_{k=1}^\infty \|\beta_k\varphi_k\|^2 (1-\lambda_k)^2 \lambda_k^{2n} k^{2n}\\
&\leq& 2\sum_{k=1}^\infty \|\bb P_k f\|^2 \lambda_k^{2n}k^{2n} + 2\sum_{k=1}^\infty \|\bb P_k f\|^2\lambda_k^{2(n+1)}k^{2(n+1)}\\
&=&2(\|f\|_{n} + \|f\|_{n+1}).
\end{eqnarray*}

Therefore, by the definition of $\mc S_W(\bb T^d)$, $\mc L_W f$ belongs to $\mc S_W(\bb T^d)$. Furthermore, $\mc L_W$ is continuous from $\mc S_W(\bb T^d)$ to $\mc S_W(\bb T^d)$.
\end{proof}

\section{Equilibrium Fluctuations}\label{ef}
We begin by stating some results on homogenization of differential operators obtained in \cite{sv}, which will be very useful along this Section. 

Let $L^2_{x^i\x W_i}({\bb T}^d)$ be the space of square integrable functions with respect to the product measure $d x_1\otimes\cdots\otimes d x_{i-1}\otimes d W_i \otimes d x_{i+1}\otimes\cdots\otimes d x_d$, and $H_{1,W}(\bb T^d)$ be the $W$-Sobolev space introduced in \cite{sv}.

Let $\lambda>0$, $f$ be a functional on $H_{1,W}(\bb T^d)$, $u_N$ be the unique weak solution 
of $$\lambda u_N - \nabla^N A^N \nabla_W^N u_N = f,$$ 
and $u_0$ be the unique weak solution of
\begin{equation}\label{h4}
\lambda u_0 - \nabla A\nabla_W u_0 = f.
\end{equation}
For more details on existence and uniqueness of such solutions see \cite{sv}.

In this context, we say that the diagonal matrix $A$ is a \textit{homogenization} of the sequence of random matrices $A^N$, denoted by $A^N\stackrel{H}{\longrightarrow} A$, if the following conditions hold:
\begin{itemize}
\item $u_N$ converges weakly in $ H_{1,W}(\bb T^d)$ to $u_0$, when $N\to\infty$;\\
\item $a_i^N \partial_{W_i}^N u^N \to a_i \partial_{W_i} u,$ weakly in $L^2_{x^i\x W_i}({\bb T}^d)$ when $N\to\infty$.
\end{itemize}
\begin{theorem}\label{homoge}
Let $A^N$ be a sequence of ergodic random matrices, such as the one that defines our random environment. Then, almost surely, $A^N(\omega)$ admits a homogenization, where the homogenized matrix $A$ does not depend on the realization $\omega$.
\end{theorem}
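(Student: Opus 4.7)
The plan is to follow the standard scheme of stochastic homogenization, adapted to the $W$-Sobolev setting of \cite{sv}. First I would construct an auxiliary corrector on the probability space $(\Omega,\mc F,\mu)$: for each direction $j=1,\ldots,d$ the coefficient $a_j$, together with the ergodic group $\{T_x\}$ and the ellipticity bound $\theta^{-1}\le a_j\le\theta$, gives rise via a variational problem on $L^2(\Omega,\mu)$ (decomposing into ``potential'' and ``solenoidal'' fields, in the spirit of Papanicolaou--Varadhan and \cite{pr}) to a corrector $\chi_j$ that encodes the microscopic oscillations in the $j$-th direction. The homogenized diagonal matrix $A=\mathrm{diag}(a_1,\ldots,a_d)$ is then defined by
\[ a_j \;=\; \bb E_\mu\bigl[a_j\,(1-\partial_j\chi_j)\bigr], \]
with the appropriate interpretation in terms of the generalized $W$-derivatives. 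Since the group $\{T_x\}$ is ergodic, this quantity is $\mu$-almost surely constant, so $A$ is deterministic.

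Second, the ellipticity bound yields a uniform energy estimate for the discrete problem $\lambda u_N-\nabla^N A^N\nabla_W^N u_N=f$, giving uniform control of $\|u_N\|_{H_{1,W}(\bb T^d)}$. Invoking the compact embedding $H_{1,W}(\bb T^d)\hookrightarrow L^2(\bb T^d)$ established in \cite{sv}, one extracts a subsequence along which $u_N\to u_*$ weakly in $H_{1,W}(\bb T^d)$ and $a_i^N\partial_{W_i}^N u_N\to\xi_i$ weakly in $L^2_{x^i\x W_i}(\bb T^d)$. The task then becomes identifying $\xi_i$ with $a_i\,\partial_{W_i}u_*$ and $u_*$ with the unique weak solution $u_0$ of \eqref{h4}.

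Third, and this is the heart of the argument, I would carry out the identification by Tartar's method of oscillating test functions. One builds test functions of the form $v_N(x)=\phi(x)+N^{-1}\phi(x)\chi_i(T_{Nx}\omega)$ for $\phi$ smooth, pairs the equation for $u_N$ against $v_N$ and its adjoint against $u_N$, and subtracts. The ergodic theorem applied to products of $a_i^N$ with the discrete $W$-gradient of the corrector yields $\mu$-a.s.\ convergence of the spatial averages to their expectations on $(\Omega,\mu)$. Passing to the limit in both bilinear pairings gives a ``div--curl''-type identity that forces $\xi_i=a_i\partial_{W_i}u_*$. Uniqueness of the weak solution of \eqref{h4} (proved in \cite{sv}) then identifies $u_*=u_0$ and upgrades the subsequential convergence to convergence of the full sequence; because $A$ has already been shown to be deterministic by ergodicity, the homogenized matrix does not depend on $\omega$.

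The main obstacle I expect is the compensated-compactness step in the generalized-derivative setting. When $W_j$ has jumps, the usual chain rule and integration-by-parts identities are replaced by discrete ones tied to the product measure $dx^i\otimes dW_i$, and one must verify that the test-function construction involving $\chi_i$ is compatible with the $W$-difference operator $\partial_{W_i}^N$. This requires a careful discrete summation by parts together with a version of the ergodic theorem on $(\Omega,\mu)$ whose averaging is taken against the non-Lebesgue reference measures produced by $W$. Both ingredients are largely furnished by \cite{sv,pr}, but it is their combination that drives the limit identification, and hence the whole argument.
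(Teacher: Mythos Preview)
The paper does not prove Theorem~\ref{homoge} at all: it is quoted from \cite{sv} (see the opening line of Section~\ref{ef}, ``We begin by stating some results on homogenization of differential operators obtained in \cite{sv}''), so there is no in-paper argument to compare your proposal against. Your sketch is a reasonable outline of the proof that \cite{sv} carries out, following the scheme of \cite{pr}: uniform energy bounds, weak compactness in $H_{1,W}$, and identification of the limit via oscillating test functions and the ergodic theorem.

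One remark on content. Because the random matrix $A^N$ here is \emph{diagonal} and the operator $\nabla^N A^N\nabla_W^N$ decouples coordinate by coordinate, the corrector problem is effectively one-dimensional in each direction $j$, and the homogenized coefficient can be written explicitly as a harmonic-type mean involving $a_j$ alone (no cross terms). So while your general Tartar/div--curl framework is correct, the actual computation in \cite{sv} is lighter than the full multidimensional compensated-compactness machinery you describe; the ``main obstacle'' you anticipate largely evaporates because the coupling between directions is absent.
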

The following proposition regards the convergence of energies:
\begin{proposition}\label{hconvergencia}
Let $A^N\stackrel{H}{\longrightarrow} A$, as $N\to\infty$, with $u_N$ being the solution of
$$\lambda u_N - \nabla^N A^N \nabla_W^N u_N = f,$$
where $f$ is a fixed functional on $H_{1,W}(\bb T^d)$. Then, the following limit relations hold true:
$$\frac{1}{N^d}\sum_{x\in\bb T_N^d} u_N^2(x) \to \int_{\bb T^d} u_0^2(x) dx,$$
and
\begin{align*}
\frac{1}{N^{d-1}}\sum_{j=1}^d\sum_{x\in\bb T_N^d} a_{jj}^N(x) (\partial_{W_j}^N u_N(x))^2 &\left[W_j((x_i+1)/N)-W_j(x_i/N)\right]\\
 &\to \sum_{j=1}^d \int_{\bb T^d} a_{jj}(x) (\partial_{W_j} u_0(x))^2 d(x^j\x W_j),
\end{align*}
as $N\to\infty$.
\end{proposition}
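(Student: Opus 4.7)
The plan is to exploit the energy identity associated with each equation and to read the two convergences off the weak limits furnished by $A^N\stackrel{H}{\longrightarrow}A$: the $H_{1,W}$-weak convergence $u_N\rightharpoonup u_0$ and the weak $L^2_{x^j\x W_j}$-convergence of fluxes $a^N_j\partial^N_{W_j}u_N\rightharpoonup a_j\partial_{W_j}u_0$. Testing the discrete equation $\lambda u_N-\nabla^NA^N\nabla^N_Wu_N=f$ against $u_N$ and using the discrete summation by parts built into the definitions of $\nabla^N$ and $\partial^N_{W_j}$ produces
\begin{equation*}
\lambda\,\frac{1}{N^d}\sum_{x}u_N^2(x)+\frac{1}{N^{d-1}}\sum_{j,x}a^N_{jj}(x)(\partial^N_{W_j}u_N(x))^2\bigl[W_j(\tfrac{x_j+1}{N})-W_j(\tfrac{x_j}{N})\bigr]=\<f,u_N\>_N,
\end{equation*}
where $\<f,u_N\>_N$ denotes the natural discrete duality pairing; testing \eqref{h4} against $u_0$ gives the analogous continuum identity with right-hand side $\<f,u_0\>$. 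A Riemann-sum argument using the weak $H_{1,W}$-convergence of $u_N$ then shows $\<f,u_N\>_N\to\<f,u_0\>$.

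I would next upgrade the weak convergence of $u_N$ to strong $L^2$-convergence by invoking the $W$-Rellich-type compact embedding $H_{1,W}(\bb T^d)\hookrightarrow L^2(\bb T^d)$ proved in \cite{sv}. Realizing $u_N$ as its piecewise-constant extension on $\bb T^d$, the $L^2$-norm of this extension coincides (up to a vanishing correction) with the discrete norm $N^{-d}\sum_xu_N^2(x)$, and compactness forces this to converge to $\int_{\bb T^d}u_0^2\,dx$, which is the first claim of the proposition. Subtracting the mass term from the energy identity and using the already established convergence of the right-hand side then yields the second, energy limit.

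I expect the compact embedding step to be the principal obstacle: the discontinuities of the conductances $W_k$ preclude a direct appeal to the classical Rellich theorem, so one must rely on the $W$-Sobolev machinery developed in \cite{sv} together with a careful comparison between the discrete $\ell^2$ inner product and the $L^2$ inner product of piecewise-constant extensions. An alternative route avoids compactness entirely and instead bounds $B_N(u_N,u_N)$ from below by $2B_N(u_N,\varphi)-B_N(\varphi,\varphi)$ for smooth test fields $\varphi\in\mc S_W(\bb T^d)$, passes to the limit using the corrector associated with $A^N$, and then lets $\varphi\to u_0$; the difficulty then migrates to the Murat--Tartar-style treatment of $B_N(\varphi,\varphi)$, whose limit is not the naive weak limit of $a^N_{jj}$ times $(\partial_{W_j}\varphi)^2$ but the homogenized quantity, which is precisely where the correctors of $A^N$ enter.
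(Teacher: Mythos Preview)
The paper does not prove this proposition at all: it is quoted from \cite{sv} as one of the ``results on homogenization of differential operators obtained in \cite{sv}'' at the opening of Section~\ref{ef}, and no argument is supplied here. So there is nothing in the present paper to compare your sketch against.

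That said, your outline is the standard route for energy convergence in homogenization and is almost certainly what \cite{sv} does. Testing the discrete resolvent equation against $u_N$ to get the identity
\[
\lambda\|u_N\|_N^2+\text{(discrete energy)}=\langle f,u_N\rangle
\]
and its continuum counterpart, then passing to the limit in the right-hand side by weak $H_{1,W}$-convergence, is exactly the right skeleton. The one step you should be slightly more careful about is the passage $\langle f,u_N\rangle_N\to\langle f,u_0\rangle$: in the framework of \cite{sv} the discrete solutions are already embedded in $H_{1,W}(\bb T^d)$ and the pairing with $f$ is the duality pairing, so weak convergence in $H_{1,W}$ gives this \emph{directly}---no separate Riemann-sum argument is needed, and phrasing it as one obscures that the statement is tautological once the embedding is fixed. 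Your identification of the $W$-Rellich embedding $H_{1,W}\hookrightarrow L^2$ as the crux is correct; once that is in hand, strong $L^2$-convergence of $u_N$ follows and the first limit drops out, after which subtraction in the energy identity gives the second. The alternative variational route you mention (lower bound by $2B_N(u_N,\varphi)-B_N(\varphi,\varphi)$ plus corrector analysis) also works but is heavier machinery than needed here, since the resolvent structure already hands you an exact identity rather than a mere inequality.
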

\subsection{Martingale Problem}\label{martprob}

Recall that $Y^N_.$ is the bounded linear functional acting on functions $G\in \mc S_W(\bb T^d)$:
\begin{equation}\label{flutdiscr}
Y^N_t(G)  = \frac1{N^{d/2}}\sum_{x\in \bb T^d}G(x)\big[\eta_t(x)-\rho\big].
\end{equation}
This process $Y^N_.$ is called \textit{density fluctuation field}.

Denote by $Q_N$ the distribution in $D([0, T ], \mc S_W(\bb T^d))$ induced by the process $Y^N_t$ 
and initial distribution $\nu_\rho$. In order to prove the martingale problem we introduce
the \textit{corrected density fluctuation field} defined on  solutions functions of the a appropriate problem of homogenization:
\begin{equation}\label{flutdiscrcor}
Y^{N,\lambda}_t(G)  = \frac1{N^{d/2}}\sum_{x\in \bb T^d}G^\lambda_N(x)\big[\eta_t(x)-\rho\big],
\end{equation}
where $G_N^\lambda$ is the weak solution for the equation
\begin{equation}\label{prob homo}
\lambda G_N^\lambda - L_NG_N^\lambda = \lambda G - \nabla A \nabla_W G 
\end{equation}
that, via homogenization, converges to $G$ which is the trivial solution of the problem
\begin{equation*}
\lambda G - \nabla A \nabla_W G = \lambda G - \nabla A \nabla_W G. 
\end{equation*}

The processes $Y_\cdot^N$ and $Y_\cdot^{N,\lambda}$ have the same asymptotic behavior, as we will see. But some calculations are simpler with one of them than with the other. In this way, we have defined two processes in $D([0, T ],\mc S_W'(\bb T^d))$, where $\mc S_W'(\bb T^d)$ is the  topologic dual of the space $\mc S_W(\bb T^d)$.

Fix some process $Y_\cdot$ in $D([0, T ],\mc S_W'(\bb T^d))$, and for $t\ge0$, let $\mc F_t$ be the $\sigma$-algebra generated by $Y_s(H)$ for
$s\le t$ and $H\in \mc S_W(\bb T^d)$. Furthermore, set $\mc F_\infty = \sigma \Big(\bigcup_{t\ge 0}\mc F_t\Big)$. Denote by $Q^{\lambda}_N$
the distribution on $D([0, T ],\mc S_W'(\bb T^d))$ induced by the corrected density fluctuation field $Y^{N,\lambda}_\cdot$ and initial distribution $\nu_\rho$.

Theorem \ref{tfeq} is a consequence of the following result about the corrected fluctuation field.

\begin{theorem}\label{4.2}
Let $Q$ be the probability measure on $D([0,T],\mc S_W'(\bb T^d))$ corresponding to the generalized Ornstein-Uhlenbeck process of
mean zero and characteristics $\phi'(\rho)\nabla \cdot {A} \nabla_W$, $\sqrt{2\chi(\rho)\phi'(\rho){A}}\nabla_W$. Then the sequence $\{ Q_{N}^{\lambda}\}_{N\geq{1}}$ converges weakly to the probability measure $ Q$. \label{th:flu}
\end{theorem}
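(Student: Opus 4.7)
The plan is to prove weak convergence in $D([0,T],\mc S_W'(\bb T^d))$ by the standard trio of steps: prove tightness of $\{Q_N^\lambda\}$, identify every limit point as a solution of the martingale problem that characterizes the generalized Ornstein--Uhlenbeck process, and appeal to uniqueness of that martingale problem on the nuclear space $\mc S_W(\bb T^d)$.

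First, for fixed $G\in\mc S_W(\bb T^d)$, apply Dynkin's formula to the real-valued process $Y_t^{N,\lambda}(G)$ to produce a mean-zero martingale
\begin{equation*}
M_t^{N,\lambda}(G) \;=\; Y_t^{N,\lambda}(G) - Y_0^{N,\lambda}(G) - \int_0^t N^2 L_N Y_s^{N,\lambda}(G)\,ds.
\end{equation*}
The purpose of the corrected field is precisely to simplify the drift: a summation by parts rewrites $N^2 L_N Y_s^{N,\lambda}(G)$ in terms of $\bb L_N G_N^\lambda = \nabla^N A^N \nabla_W^N G_N^\lambda$ applied to $c_{x,x+e_j}(\eta)[\eta(x+e_j)-\eta(x)]$, and by \eqref{prob homo} we may substitute $\bb L_N G_N^\lambda = \lambda G_N^\lambda - \lambda G + \nabla A \nabla_W G$. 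The contribution $\lambda G_N^\lambda - \lambda G$ only enters through $Y_s^{N,\lambda}(\lambda G_N^\lambda - \lambda G)$ and is controlled by Theorem \ref{homoge}, which gives $G_N^\lambda \to G$ strongly enough in $L^2$ for this term to vanish.

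Second, the factor $c_{x,x+e_j}(\eta)[\eta(x+e_j)-\eta(x)]$ must be replaced by a linear functional of $\eta$. This is exactly what the Boltzmann--Gibbs principle (Section \ref{s4}) provides: it substitutes $c_{x,x+e_j}(\eta)$ by its equilibrium expectation $\phi'(\rho)=1+2b\rho$ at the cost of an error that is $L^2(\nu_\rho)$-negligible in the diffusive scale. After the substitution, the drift reduces to $\phi'(\rho)\int_0^t Y_s^{N,\lambda}(\nabla A\nabla_W G)\,ds$ plus vanishing terms, which matches the drift in \eqref{spde-1}. Next, compute the quadratic variation of $M_\cdot^{N,\lambda}(G)$; after averaging over $\nu_\rho$ (using independence of $c_{x,x+e_j}$ and $[\eta(x+e_j)-\eta(x)]^2$) the relevant constant equals $2\chi(\rho)\phi'(\rho)$, and the remaining Riemann-type sum becomes
\begin{equation*}
\frac{2\chi(\rho)\phi'(\rho)}{N^{d-1}}\sum_{j=1}^d\sum_{x\in\bb T_N^d} a_j^N(x)\,\bigl[W_j(\tfrac{x_j+1}{N})-W_j(\tfrac{x_j}{N})\bigr]\bigl(\partial^N_{W_j}G_N^\lambda(x)\bigr)^2.
\end{equation*}
Proposition \ref{hconvergencia} (convergence of energies) yields the limit $2\chi(\rho)\phi'(\rho)\,t\sum_{j}\int_{\bb T^d} a_j(x)(\partial_{W_j}G)^2\,d(x^j\x W_j)$, which is the target quadratic variation in Theorem \ref{tfeq}.

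Third, tightness of $\{Q_N^\lambda\}$ follows by Mitoma's criterion, reducing to tightness of the real-valued $Y_t^{N,\lambda}(G)$ (carried out in Section \ref{tig}) via Aldous's criterion, using the martingale plus drift bounds from the steps above. Any limit point $Q$ therefore satisfies the martingale problem associated with the generator $\phi'(\rho)\mc L_A := \phi'(\rho)\nabla A\nabla_W$ and noise covariance $\sqrt{2\chi(\rho)\phi'(\rho)A}\nabla_W$. Uniqueness of this martingale problem is supplied by the semigroup theory of Lemmas \ref{semi} and \ref{continua}: since $\mc L_W$ generates a $C_{0,1}$-semigroup on $\mc S_W(\bb T^d)$, so does $\phi'(\rho)\nabla A\nabla_W$ (the homogenized operator enjoys the same spectral picture), and the linear SDE on the nuclear dual $\mc S_W'(\bb T^d)$ has a unique solution by the general theory recalled in the Appendix. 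The main obstacle is the simultaneous handling of three error sources in the drift analysis, namely the homogenization error $G_N^\lambda - G$ and $A^N\nabla_W^N G_N^\lambda - A\nabla_W G$, the Boltzmann--Gibbs replacement error, and the boundary contributions arising from the discontinuities of $W$; each of these is absorbed using Theorem \ref{homoge}, Proposition \ref{hconvergencia}, and the Boltzmann--Gibbs estimate respectively, and the equivalence between the corrected and uncorrected fluctuation fields advertised in the introduction is what lets one switch freely between $Y^{N,\lambda}$ (convenient for the drift) and $Y^N$ (convenient for the quadratic variation and tightness).
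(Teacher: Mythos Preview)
Your outline matches the paper's proof essentially step for step: Dynkin decomposition of $Y^{N,\lambda}(G)$, replacement of $\bb L_N G_N^\lambda$ by $\nabla A\nabla_W G$ via the homogenization equation \eqref{prob homo} and the $L^2_N$-convergence $G_N^\lambda\to G$, linearization of the drift by the Boltzmann--Gibbs principle, convergence of the quadratic variation by Proposition \ref{hconvergencia}, tightness via Mitoma/Aldous (done in Section \ref{tig} for $Y^N$ and transferred to $Y^{N,\lambda}$ by Lemma \ref{lproc}), and uniqueness via Theorem \ref{t5} and the Appendix. One descriptive inaccuracy: after the (two) summations by parts the drift is \emph{not} $\bb L_N G_N^\lambda$ against the current $c_{x,x+e_j}(\eta)[\eta(x+e_j)-\eta(x)]$; the gradient structure turns the current into a discrete gradient, so the drift is $\bb L_N^j G_N^\lambda$ against $\eta(x)$ together with the cylinder functions $h_{1,j}(\eta)=\eta(0)\eta(e_j)$ and $h_{2,j}(\eta)=\eta(-e_j)\eta(e_j)$, and the Boltzmann--Gibbs principle replaces $\tau_x h_{i,j}-\rho^2$ by $2\rho(\eta(x)-\rho)$ (which is what produces the factor $\phi'(\rho)=1+2b\rho$), rather than replacing $c_{x,x+e_j}$ by its mean.
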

The next Lemma shows that tightness of $Y^{N,\lambda}_t$ follows from tightness of $Y^{N}_t$, and even more, that they have the same limit points. So we can conclude our main theorem from the Theorem \ref{4.2}
\begin{lemma}\label{lproc}
for all $t\in [0,T]$ and $G\in \mc S_W(\bb T^d)$,
$\lim_{N\to\infty}E_{\nu_{\rho}}\big[Y^N_t(G)- Y^{N,\lambda}_t(G)\big]^2 = 0.$
\end{lemma}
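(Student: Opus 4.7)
The plan is to exploit the fact that $\nu_\rho$ is a stationary product measure, so at any fixed time $t$ the variables $\{\eta_t(x)\}_{x\in\bb T_N^d}$ are i.i.d. Bernoulli$(\rho)$. Writing
$$Y_t^N(G) - Y_t^{N,\lambda}(G) \;=\; \frac{1}{N^{d/2}}\sum_{x\in\bb T_N^d}\bigl[G(x/N) - G_N^\lambda(x)\bigr]\bigl[\eta_t(x)-\rho\bigr],$$
the orthogonality $E_{\nu_\rho}[(\eta_t(x)-\rho)(\eta_t(y)-\rho)] = \chi(\rho)\,\delta_{x,y}$ yields
$$E_{\nu_\rho}\bigl[Y_t^N(G)-Y_t^{N,\lambda}(G)\bigr]^2 \;=\; \frac{\chi(\rho)}{N^d}\sum_{x\in\bb T_N^d}\bigl[G(x/N)-G_N^\lambda(x)\bigr]^2.$$
The task then reduces to showing that this discrete $L^2$-defect vanishes as $N\to\infty$.

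The next step is to identify the homogenization limit of $G_N^\lambda$. By Theorem \ref{homoge}, $G_N^\lambda$ converges weakly in $H_{1,W}(\bb T^d)$ to the unique weak solution $G_0$ of
$$\lambda G_0 - \nabla A\nabla_W G_0 \;=\; \lambda G - \nabla A\nabla_W G,$$
and by uniqueness $G_0 = G$. Expanding the square,
$$\frac{1}{N^d}\sum_x\bigl[G(x/N)-G_N^\lambda(x)\bigr]^2 = \frac{1}{N^d}\sum_x G(x/N)^2 - \frac{2}{N^d}\sum_x G(x/N)G_N^\lambda(x) + \frac{1}{N^d}\sum_x G_N^\lambda(x)^2,$$
I would treat the three sums separately. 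The first converges to $\int_{\bb T^d}G^2\,dx$ by a Riemann-sum argument, using the regularity of $G\in\mc S_W(\bb T^d)$. The third converges to $\int_{\bb T^d}G_0^2\,dx = \int_{\bb T^d}G^2\,dx$ by the convergence-of-energies statement in Proposition \ref{hconvergencia} applied to $u_N=G_N^\lambda$.

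The crux is the cross term. One approach is to identify the lattice function $G_N^\lambda$ with its piecewise-constant extension to $\bb T^d$; then the weak $H_{1,W}$-convergence provided by Theorem \ref{homoge} delivers, in particular, weak $L^2(\bb T^d)$-convergence of $G_N^\lambda$ to $G$, so that testing against the smooth function $G$ gives
$$\frac{1}{N^d}\sum_x G(x/N)\,G_N^\lambda(x) \;\longrightarrow\; \int_{\bb T^d}G^2\,dx.$$
Combining the three limits, the terms sum to $\int G^2 - 2\int G^2 + \int G^2 = 0$, which completes the proof. The main obstacle I anticipate is precisely this cross term: one must be careful to interpret the discrete inner product as a continuum inner product against a suitable extension of $G_N^\lambda$ and verify that the weak convergence coming from homogenization is sufficient to pass to the limit. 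A clean alternative, if the weak-convergence identification is inconvenient, is to combine the weak $L^2$-convergence with the already established convergence of norms $N^{-d}\sum_x G_N^\lambda(x)^2 \to \int G^2$, which by the Radon--Riesz property upgrades the convergence of $G_N^\lambda$ to $G$ to strong $L^2$, from which the cross-term limit is immediate.
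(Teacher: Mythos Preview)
Your proof is correct and follows the same route as the paper: both reduce the expectation to $\chi(\rho)\|G-G_N^\lambda\|_N^2$ via the product structure of $\nu_\rho$, then argue that this discrete $L^2$-defect vanishes. The paper simply cites ``convergence of energies'' for the latter step, while you unpack it via weak convergence (Theorem~\ref{homoge}) plus convergence of norms (Proposition~\ref{hconvergencia}) and a Radon--Riesz argument, which is a valid and more explicit justification of the same fact.
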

\begin{proof}
By convergence of energies, we have that $\lim_{N\to\infty}G^\lambda_N = G$ in $L^2_N(\bb T^d)$, i.e. 
\begin{equation}\label{f1}
\|G^\lambda_N - G\|_N^2 := \frac{1}{N^d}\sum_{x\in\bb T_N^d} [G_N^\lambda(x/N)-G(x/N)]^2\to 0,\qquad \text{as}\qquad\; N\to \infty. 
\end{equation}
Since $\nu_\rho$ is a product measure we obtain
$$E_{\nu_\rho}\big[Y^N_t(G)- Y^{N,\lambda}_t(G)\big]^2\;=\;$$
$$=\;E_{\nu_\rho}\big[\frac 1 {N^d}\sum_{x,y\in\bb T_N^d}[G^\lambda_N(x/N) - G(x/N)][G^\lambda_N(y/N) - G(y/N)](\eta_t(x)-\rho)(\eta_t(y)-\rho)\big]\;=$$

$$=\;E_{\nu_\rho}\big[\frac 1 {N^d}\sum_{x\in\bb T_N^d}[G^\lambda_N(x/N) - G(x/N)]^2(\eta_t(x)-\rho)^2\big]
\;\le \frac{C(\rho)}{N^d}\sum_{x\in\bb T_N^d}[G^\lambda_N(x/N) - G(x/N)]^2,$$
where $C(\rho)$ is a constant that depend on $\rho$. By \eqref{f1} the last expression vanishes as $N\to \infty$.
\end{proof}

\centerline{{\bf Proof of Theorem \ref{4.2}}}
Consider the martingale
\begin{equation}\label{mo}
M_t^{N}(G) = Y^N_t(G) - Y^N_0(G) - \int_0^t \, N^2 L_N Y^N_s(G)ds
\end{equation}
associated to the original process and
\begin{equation}\label{mc}
M_t^{N,\lambda}(G) = Y^{N,\lambda}_t(G) - Y^{N,\lambda}_0(G) - \int_0^t \, N^2 L_N Y^{N,\lambda}_s(G)ds
\end{equation}
associated to the corrected process.

A long, albeit simple, computation shows that the quadratic variation of the martingale $M^{N,\lambda}_t(G)$,
$\<M^{N,\lambda}(G)\>_t$, is given by:
\begin{align}\label{vq}
 \frac{1}{N^{d-1}}\sum^d_{j=1}\sum_{x\in \bb T^d}a_{jj}^N[\partial_{W_j}^N G_N^{\lambda}(x/N)]^2&[W((x+e_j)/N) - W(x/N)]\times\\
&\times\int_0^t c_{x,x+e_j}(\eta_s) \, [\eta_s(x+e_j) - \eta_s(x)]^2 \, ds\;.\nonumber
\end{align}

Is not difficult see that the quadratic variation of the martingale $M^{N}_t(G)$, $\<M^{N}(G)\>_t$, have the expression 
\eqref{vq} with $G$ replacing $G_N^{\lambda}$. Further,
\begin{equation}
E_{\nu_\rho}\big[c_{x,x+e_j}(\eta) \, [\eta_s(x+e_j) - \eta_s(x)]^2\big] = 2\chi(\rho)\phi'(\rho).
\end{equation}
\begin{lemma}\label{lvq}
Fix $G\in \mc S_W(\bb T^d)$ and $t>0$, and Let $\<M^{N,\lambda}(G)\>_t$ and $\<M^N(G)\>_t$ be the quadratic variation of the martingales $M^{N,\lambda}_t(G)$ and $M^N_t(G)$, 	
respectively. Then,
\begin{equation}\label{vq2}
\lim_{N\to\infty}E_{\nu_\rho}\big[\<M^{N,\lambda}(G)\>_t - \<M^N(G)\>_t\big]^2 \;=\;0.
\end{equation}
\begin{proof}
Fix $G\in \mc S_W(\bb T^d)$ and $t>0$. A straightforward calculation shows that
$$E_{\nu_\rho}\big[\<M^{N,\lambda}(G)\>_t - \<M^N(G)\>_t\big]^2 \le$$
$$\le\Big[k^2t^2\frac{1}{N^{d-1}}\sum^d_{j=1}\sum_{x\in \bb T^d}a_{jj}^N[\big(\partial_{W_j}^N G_N^{\lambda}(x/N)\big)^2-\big(\partial_{W_j}^N G(x/N)\big)^2][W((x+e_j)/N) - W(x/N)] \Big]^2.$$
Where the constant $k$ comes from the integral term. By the convergence of energies (Proposition \ref{hconvergencia}), the last term vanishes as $N\to \infty$.
\end{proof}
\end{lemma}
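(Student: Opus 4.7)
The plan is to exploit the almost-sure uniform boundedness of the integrand $c_{x,x+e_j}(\eta_s)[\eta_s(x+e_j)-\eta_s(x)]^2$ in order to convert the $L^2_{\nu_\rho}$-estimate into a deterministic one, and then reduce everything to the convergence-of-energies Proposition \ref{hconvergencia}. Since $c_{x,x+e_j}$ is a polynomial in the $\{0,1\}$-valued occupation variables, there is a constant $k=k(b)$ with $c_{x,x+e_j}(\eta)[\eta(x+e_j)-\eta(x)]^2\le k$ pointwise. The two quadratic variations $\<M^{N,\lambda}(G)\>_t$ and $\<M^N(G)\>_t$ have the expression \eqref{vq} with only the replacement of $\partial_{W_j}^N G_N^\lambda$ by $\partial_{W_j}^N G$; subtracting and pulling the bound $kt$ out of the time integrals yields
\begin{equation*}
\bigl|\<M^{N,\lambda}(G)\>_t - \<M^N(G)\>_t\bigr| \;\le\; kt\cdot D_N(G),
\end{equation*}
where
\begin{equation*}
D_N(G) \;:=\; \frac{1}{N^{d-1}}\sum_{j,x} a_{jj}^N\bigl|(\partial_{W_j}^N G_N^\lambda(x/N))^2-(\partial_{W_j}^N G(x/N))^2\bigr|\bigl[W((x+e_j)/N)-W(x/N)\bigr].
\end{equation*}
Since $D_N(G)$ is deterministic, the target estimate \eqref{vq2} reduces to proving $D_N(G)\to 0$.

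Next, I would factor $|a^2-b^2|=|a-b|(a+b)$ and apply Cauchy--Schwarz with respect to the positive discrete measure $a_{jj}^N(x)[W((x+e_j)/N)-W(x/N)]/N^{d-1}$, obtaining $D_N(G)^2\le E_N^-(G)\cdot E_N^+(G)$, where
\begin{equation*}
E_N^{\pm}(G) \;=\; \frac{1}{N^{d-1}}\sum_{j,x} a_{jj}^N\bigl(\partial_{W_j}^N G_N^\lambda(x/N)\pm\partial_{W_j}^N G(x/N)\bigr)^2\bigl[W((x+e_j)/N)-W(x/N)\bigr].
\end{equation*}
It then suffices to show that $E_N^+(G)$ stays bounded and $E_N^-(G)\to 0$. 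Expanding the squares splits each $E_N^{\pm}(G)$ into three pieces: the energy of $G_N^\lambda$, which converges to $\sum_j\int a_{jj}(\partial_{W_j} G)^2\,d(x^j\x W_j)$ directly by Proposition \ref{hconvergencia}; a cross term $\frac{1}{N^{d-1}}\sum a_j^N(\partial_{W_j}^N G_N^\lambda)(\partial_{W_j}^N G)[W((x+e_j)/N)-W(x/N)]$, which converges to the same limit by pairing the weak convergence $a_j^N\partial_{W_j}^N G_N^\lambda\rightharpoonup a_j\partial_{W_j} G$ in $L^2_{x^j\x W_j}(\bb T^d)$ (built into the definition of $H$-convergence, Theorem \ref{homoge}) against the strong $L^2_{x^j\x W_j}$-convergence $\partial_{W_j}^N G\to\partial_{W_j} G$ available because $G\in\mc S_W(\bb T^d)\subset\mc D_W$ is smooth in the $W$-sense; and finally the pure energy of $G$ against the random conductances.

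The crux, and what I expect to be the main obstacle, is the last of these three pieces: one must establish
\begin{equation*}
\frac{1}{N^{d-1}}\sum_{j,x} a_{jj}^N(\partial_{W_j}^N G(x/N))^2\bigl[W((x+e_j)/N)-W(x/N)\bigr] \;\longrightarrow\; \sum_j\int_{\bb T^d} a_{jj}(\partial_{W_j} G)^2\,d(x^j\x W_j),
\end{equation*}
so that all three pieces share the same limit and hence $E_N^-(G)\to 0$. This identity is what the paper implicitly invokes when it writes ``by the convergence of energies''; at the deterministic level it is a genuine statement about the $H$-convergence framework of \cite{sv} for smooth test functions in $\mc S_W(\bb T^d)$, and the cleanest way to extract it is either to read it directly out of \cite{sv} or to approximate $G$ by the solution of an auxiliary homogenized problem with data chosen so that $G$ is its limit, then apply Proposition \ref{hconvergencia} to the approximating sequence and pass to the limit in the data. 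Once this is in hand, $E_N^+(G)$ is bounded by the parallelogram identity applied to the three converging pieces, Cauchy--Schwarz gives $D_N(G)^2\to 0$, and \eqref{vq2} follows.
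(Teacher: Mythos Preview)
Your proof takes essentially the same route as the paper: bound the random integrand $c_{x,x+e_j}(\eta_s)[\eta_s(x+e_j)-\eta_s(x)]^2$ by a constant, reduce to a deterministic quantity built from the difference of the discrete $W$-energies of $G_N^\lambda$ and $G$, and then invoke convergence of energies. The paper compresses all of this into two lines and writes the signed sum $\sum a_{jj}^N\bigl[(\partial_{W_j}^N G_N^\lambda)^2-(\partial_{W_j}^N G)^2\bigr]$ rather than your $D_N(G)$ with absolute values; your version is the more honest one, since the integral term depends on $x,j$ and cannot simply be factored out with its sign. Your Cauchy--Schwarz splitting $D_N(G)^2\le E_N^-(G)\,E_N^+(G)$ is a clean way to turn the absolute-value sum back into genuine energies, and is not present in the paper's write-up.

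You are also right to flag the ``crux'': both the paper's proof and yours ultimately need
\[
\frac{1}{N^{d-1}}\sum_{j,x} a_{jj}^N\bigl(\partial_{W_j}^N G(x/N)\bigr)^2\bigl[W((x+e_j)/N)-W(x/N)\bigr]\;\longrightarrow\;\sum_j\int_{\bb T^d} a_{jj}(\partial_{W_j} G)^2\,d(x^j\!\otimes\! W_j),
\]
i.e.\ convergence of the energy of the \emph{fixed} test function $G$ against the random conductances, not only of the solution $G_N^\lambda$. The paper silently folds this into ``by the convergence of energies (Proposition~\ref{hconvergencia})''; you correctly isolate it as a separate ingredient and sketch two reasonable routes (direct appeal to \cite{sv}, or realizing $G$ as its own homogenized limit via \eqref{prob homo} so that Proposition~\ref{hconvergencia} applies verbatim). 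So your argument is the paper's argument, made rigorous at exactly the point where the paper is terse.
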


\begin{lemma}\label{f2} Let $G\in \mc S_W(\bb T^d)$ and $d>1$. Then
\begin{align*}
 \lim_{N\to \infty}E_{\nu_\rho}\Big[\frac{1}{N^{d-1}}\int_0^t \,ds&\sum^d_{j=1}\sum_{x\in \bb T^d}a_{jj}^N\big(\partial_{W_j}^N G(x/N)\big)^2[W((x+e_j)/N) - W(x/N)]\times\\
&\times \big[c_{x,x+e_j}(\eta_s) \, [\eta_s(x+e_j) - \eta_s(x)]^2 -2\chi(\rho)\phi'(\rho)\big]\Big]^2\;=\;0.
\end{align*}
\end{lemma}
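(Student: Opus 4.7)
The plan is to bound the second moment by a standard $H_{-1}$-inequality for additive functionals of the reversible process, then estimate the resulting $H_{-1}$-norm using the locality and mean-zero property of the integrand. First I would denote
\[
F_N(\eta) \;:=\; \frac{1}{N^{d-1}}\sum_{j,x} A(x,j)\,V_{x,j}(\eta),
\]
where $A(x,j) := a_{jj}^N(x)(\partial_{W_j}^N G(x/N))^2 [W((x+e_j)/N)-W(x/N)]$ and $V_{x,j}(\eta) := c_{x,x+e_j}(\eta)[\eta(x+e_j)-\eta(x)]^2 - 2\chi(\rho)\phi'(\rho)$. Because $c_{x,x+e_j}(\eta)$ depends only on $\eta(x-e_j),\eta(x+2e_j)$, which under the product measure $\nu_\rho$ are independent of $\eta(x),\eta(x+e_j)$, one checks directly that $E_{\nu_\rho}[V_{x,j}]=0$.

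Second, since $\nu_\rho$ is reversible for $N^2 L_N$, the classical variance estimate for mean-zero additive functionals yields
\[
E_{\nu_\rho}\Big[\Big(\int_0^t F_N(\eta_s)\,ds\Big)^2\Big] \;\le\; 20\,t\,\|F_N\|^2_{-1,N^2 L_N},
\]
with the $H_{-1}$-norm given by $\|F\|^2_{-1,N^2 L_N} = \sup_{g} \{\, 2\<F,g\>_{\nu_\rho} - N^2\<g,-L_N g\>_{\nu_\rho}\,\}$, the supremum running over bounded local $g$. This reduces matters to bounding this $H_{-1}$-norm.

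Third, to estimate the $H_{-1}$-norm I would exploit that each $V_{x,j}$ is a bounded, mean-zero function supported on the four sites $\{x-e_j,x,x+e_j,x+2e_j\}$. Inverting the exchange generator on this finite block, with constants uniform in the realization $\omega$ thanks to the ellipticity bound $\theta^{-1}\le a_j^N\le\theta$, produces a uniformly bounded local $h_{x,j}$ whose bond-derivatives along $(x,x+e_j)$ and the neighboring bonds compensate $V_{x,j}$, up to a residual absorbable into the Dirichlet form. Plugging $g = \varepsilon N^{-(d-1)}\sum_{j,x} A(x,j) h_{x,j}$ into the variational formula, using Cauchy--Schwarz to control the bounded-overlap cross terms, and optimizing $\varepsilon$, one obtains an estimate of the form
\[
\|F_N\|^2_{-1,N^2 L_N} \;\le\; \frac{C(b,\rho,\theta)}{N^{2d}}\sum_{j,x}\frac{A(x,j)^2}{W_j((x_j+1)/N)-W_j(x_j/N)}.
\]
One factor $[W_j((x_j+1)/N)-W_j(x_j/N)]$ coming from $A(x,j)^2$ cancels the denominator, and since $a^N_{jj}(\partial^N_{W_j}G)^2$ is uniformly bounded, Proposition~\ref{hconvergencia} bounds the remaining sum by $C(G)N^{d-1}$. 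Hence the right-hand side is $O(N^{-(d+1)})$ and the second moment of $\int_0^t F_N(\eta_s)\,ds$ is $O(t/N^{d+1})\to 0$.

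The main obstacle is the third step: producing the local primitive $h_{x,j}$ with bounds uniform in $x,j,N$ and in the environment $\omega$, and handling the cross-bond terms in the Dirichlet form carefully enough that no extra power of $N$ is lost. Once this ingredient is in place, the rest is a routine application of Proposition~\ref{hconvergencia}.
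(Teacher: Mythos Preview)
Your approach via the Kipnis--Varadhan $H_{-1}$ bound is considerably more sophisticated than what the paper actually does, and it introduces a technical obstacle that is entirely avoidable. The paper's argument is elementary: by Cauchy--Schwarz in the time variable and stationarity of $\nu_\rho$,
\[
E_{\nu_\rho}\Big[\Big(\int_0^t F_N(\eta_s)\,ds\Big)^2\Big]\;\le\; t^2\,E_{\nu_\rho}\big[F_N(\eta)^2\big],
\]
and then one expands the square of $F_N = N^{-(d-1)}\sum_{j,x} A(x,j)V_{x,j}$ under the \emph{product} measure $\nu_\rho$. Since each $V_{x,j}$ has mean zero and is supported on the four sites $\{x-e_j,x,x+e_j,x+2e_j\}$, the covariance $E_{\nu_\rho}[V_{x,j}V_{y,i}]$ vanishes unless the supports overlap, which for each $(x,j)$ happens for only $O(1)$ pairs $(y,i)$. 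This immediately yields a bound of the form $t^2\theta^2 C(\rho)/N^{d-1}$ times a quantity controlled by the discrete $W$-energy $\|\nabla_W^N G\|_{W,N}^2$, and this tends to zero for $d>1$.

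Your $H_{-1}$ route would give a faster rate (roughly $t/N^{d+1}$ instead of $t^2/N^{d-1}$), which is useful in $d=1$ where the elementary bound fails --- indeed the paper explicitly defers the one-dimensional case to a separate argument. But in the regime $d>1$ of the lemma, the simple variance bound already suffices, and you pay for the sharper estimate with the genuine difficulty you flag: $V_{x,j}$ is not a priori in the range of the local block generator (mean zero under $\nu_\rho$ is weaker than being orthogonal to all functions of the block particle number), so producing the primitive $h_{x,j}$ with the claimed uniform bounds requires real work. That work is unnecessary here.
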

\begin{proof}
Fix $G\in \mc S_W(\bb T^d)$. The term in previous expression is  less than or equal to 
\begin{equation*}
\frac{t^2\theta^2C(\rho)}{N^{d-1}}\|\nabla^N_WG^2\|_{W,N}\|\nabla^N_WG^2\|_{W,N},
\end{equation*}
where
$$\|\nabla^N_WG^2\|_{W,N}^2 := \frac{1}{N^{d-1}}\sum_{j=1}^d \sum_{x\in \bb T^d}\big(\partial_{W_j}^N G(x/N)\big)^2[W((x+e_j)/N) - W(x/N)].$$
For $d>1$, the previous term converges to zero as $N\to \infty$. 
\end{proof}
 The case $d=1$ follows from calculations similar to the ones found in \cite{mil}. So, by Lemma \ref{lvq} and \ref{f2}, $\<M^{N,\lambda}(G)\>_t$ is given by
$$ \frac{2t\chi(\rho)\phi'(\rho)}{N^{d-1}}\sum^d_{j=1}\sum_{x\in \bb T^d}a_{jj}^N\big(\partial_{W_j}^N G_N^\lambda(x/N)\big)^2[W((x+e_j)/N) - W(x/N)]$$
 plus a term that vanish in $L^2_{\nu_\rho}(\bb T^d)$ as $N\to \infty$. By the convergence of energies, Proposition \ref{hconvergencia}, it converges, as $N\to \infty$, to 
 $$ 2t\chi(\rho)\phi'(\rho)\sum^d_{j=1}\int_{\bb T^d}a_{jj}^N\big(\partial_{W_j} G(x)\big)^2dx^j\x W_j.$$

Our goal now consists in showing that it is possible to write the integral part of the martingale as the integral of a function of the density fluctuation field plus a term that goes to zero in $L^2_{\nu_\rho}(\bb T^d)$. 
By a long, but simple, computation, we obtain that
 
\begin{eqnarray*}
\!\!\!\!\!\!\!\!\!\!\!\!\!\! &&
N^2 L_N Y^{N,\lambda}_s(G)\;=\;\sum^d_{j=1}\big \{\frac {1}{N^{d/2}} \sum_{x\in  T^d_N}  {\bb L}_N^j G^\lambda_N(x/N)\, \eta_s(x)
\\
\!\!\!\!\!\!\!\!\!\!\!\!\!\! && \quad
+\; \frac{b}{N^{d/2}} \sum_{x\in  T^d_N} \big [  {\bb L}_N^j G^\lambda_N
((x+e_j)/N) +  {\bb L}_{N}^j G^\lambda_N (x/N) \big ] \,
(\tau_x h_{1,j}) (\eta_s) \\
\!\!\!\!\!\!\!\!\!\!\!\!\!\! && \qquad
- \; \frac{b}{N^{d/2}} \sum_{x\in  T^d_N}  {\bb L}_N^j G^\lambda_N
(x/N) (\tau_x h_{2,j}) (\eta_s)\big \}\;,
\end{eqnarray*}
where $\{\tau_x: x\in \bb Z^d\}$ is the group of translations, so that
$(\tau_x \eta)(y) = \eta(x+y)$ for $x$, $y$ in $\bb Z^d$, and the
sum is understood modulo $N$. Also, $h_{1,j}$, $h_{2,j}$ are the cylinder functions
\begin{equation*}
h_{1,j}(\eta) \;=\; \eta(0) \eta({e_j})\;,\quad h_{2,j}(\eta) \;=\; \eta(-e_j)  \eta(e_j)\;.
\end{equation*}

Note that inside the expression $N^2L_NY^{N,\lambda}_s$ we may replace ${\bb L}^j_NG^\lambda_N$ by $a_j\partial_{x_j}\partial_{W_j}G$. Indeed,

\begin{align*}
E_{\nu(\rho)}\Big \{\int_0^t\sum^d_{j=1}&\frac {1}{N^{d/2}} \sum_{x\in \bb T^d_N} \Big [ {\bb L}_N^j G^\lambda_N(x/N)-a_j\partial_{x_j} \partial_{W_j}G(x/N)\Big ]\, \big(\eta_s(x) - \rho\big)\;+
\\
+\; \frac{b}{N^{d/2}}& \sum_{x\in \bb T^d_N} \Big[  {\bb L}_N^j G^\lambda_N((x+e_j)/N) - a_j\partial_{x_j}\partial_{W_j}G((x+e_j)/N)\;+\\
& {\bb L}_N^j G^\lambda_N(x/N) - a_j\partial_{x_j} \partial_{W_j}G(x/N)\Big]\big((\tau_x h_{1,j})(\eta_s) - \rho^2\big)\;-\\
- \; &\frac{b}{N^{d/2}} \sum_{x\in \bb T^d_N}\Big[ {\bb L}_N^j G^\lambda_N(x/N) - a_j\partial_{x_j} \partial_{W_j}G(x/N)\Big] \big((\tau_x h_{2,j}) (\eta_s) - \rho^2\big)\Big \}^2.
\end{align*}

whereas the above expression is less than or equal to
\begin{equation*}
C(\rho,b)\int_0^t\frac 1{N^d}\sum_{x\in \bb T^d}\big[ L_NG^\lambda_N(x/N) - \nabla A\nabla_WG(x/N)\big]^2.
\end{equation*}
Now, recall that $G^\lambda_N$ is solution of the equation \eqref{prob homo}, and therefore, the previous expression is less than or equal to
\begin{equation*}
\frac{t\;C(\rho,b)}{\lambda^2}\|G^\lambda_N - G\|^2_N,
\end{equation*}
 thus, by homogenization and energy estimates in Theorem \ref{homoge} and Proposition \ref{hconvergencia}, respectively, the last expression converges to zero as $N\to \infty$.
 
By the Boltzmann Gibbs principle, Theorem \ref{th:bg},  we can replace $(\tau_x h_{i,j}) (\eta_s)- \rho^2$ by $2\rho[\eta_s(x) - \rho]$ for $ i=1,2$. 
Doing so, the martingale \eqref{mc} can be written as
\begin{equation}\label{ec8}
M_t^{N,\lambda}(G) = Y^{N,\lambda}_t(G) - Y^{N,\lambda}_0(G) - \int_0^t \, \frac1{N^{d/2}}\sum_{x\in\bb T^d}\nabla A\nabla_WG(x/N)\phi'(\rho)\big(\eta_s - \rho\big)ds,
\end{equation}
plus a term that vanishes in $L^2_{\nu_{\rho}}(\bb T^d)$ as $N \to \infty$.

Notice that, by \eqref{flutdiscr}, the integrand in the previous expression is a function of the density fluctuation field $Y^N_t$. By Lemma \ref{lproc}, we can replace the term inside the integral of the above expression by a term which is a function of the corrected density fluctuation field $Y^{N,\lambda}_t$.

From the results of Section \ref{tig}, the sequence $\{{Q}_{N}^{\lambda}\}_{N\geq{1}}$ is tight and let $Q^\lambda$ be a limit point of it. Let $Y_t$ be the process in $D([0,T], \mc S_W'(\bb T^d))$ induced by the canonical projections under $Q^\lambda$. Taking the limit as $N \to \infty$,
under an appropriate subsequence, in expression (\ref{ec8}), we obtain that
\begin{equation} \label{martingal}
M^{\lambda}_{t}(G)=Y_{t}(G)-Y_{0}(G) -\int^{t}_{0}{Y}_{s}(\phi'(\rho)\nabla \cdot {A}\nabla_W G)ds
\end{equation}
where $M^\lambda_t$ is some $\mc S_W'(\bb T^d)$-valued process. In fact, $M^\lambda_t$ is a martingale. To see this, note that for a measurable set $U$ with respect to the canonical $\sigma$-algebra $\mc F_t$, $E_{Q_N^{\lambda}}[M_t^{N,\lambda}(G)\mathbf 1_U]$ converges to $E_{Q^\lambda}[M_t^\lambda(G) \mathbf 1_U]$. Since $M_\cdot^{N,\lambda}(G)$ is a martingale, $E_{Q_N^\lambda}[M_T^{N,\lambda}(G) \mathbf 1_U] = E_{Q_N^\lambda}[M_t^{N,\lambda}(G) \mathbf 1_U]$. And taking a further subsequence if necessary, this last term converges to $E_{Q^\lambda}[M_t^\lambda(G) \mathbf 1_U]$, which proves that $M_\cdot^\lambda(G)$ is a martingale for any $G \in \mc S_W(\bb T^d)$. Since all the projections of $M^\lambda_t$ are martingales, we conclude that $M^\lambda_t$ is a $\mc S_W'(\bb T^d)$-valued Martingale.

Now, we need obtain the quadratic variation $\< M^\lambda(G)\>_t$ of the martingale $M_t^\lambda(G)$. A simple application of Tchebyshev's inequality proves that $\< M^{N,\lambda}(G)\>_t$ converges in probability to 
\[
2t\chi(\rho) \phi'(\rho) \sum_{j=1}^d\int_{\bb T^d} a_j\Big[\partial_{W_j}G\Big]^2d(x^j\x W_j),
\]
 Where $\chi(\rho)$ stand for the static compressibility given by $\chi(\rho) = \rho(1-\rho)$. 
Remember the definition of quadratic variation. We need to prove that 
$$M_t^\lambda(G)^2 - 2t\chi(\rho) \phi'(\rho) \sum_{j=1}^d\int_{\bb T^d} a_j\Big[\partial_{W_j}G\Big]^2d(x^j\x W_j)$$
is a martingale. The same argument we used above applies now if we can show that $\sup_N E_{Q_N^\lambda}[M_T^{N,\lambda}(G)^4]<\infty$ and $\sup_N E_{Q_N^\lambda}[\< M^{N,\lambda}(G)\>_T^2]<\infty$. Both bounds follows easily from the explicit form of $\<M^{N,\lambda}(G)\>_t$ and (\ref{ec8}).

On the other hand, by a standard central limit theorem, ${Y}_{0}$ is a Gaussian field with covariance 
$$E\big[Y_{0}(G) Y_{0}(H)\big]=\chi(\rho)\int_{\bb T^d}G(x)H(x)dx.$$
Therefore, by Theorem \ref{t5}, $Q^\lambda$ is equal to the probability distribution $Q$ of a generalized Ornstein-Uhlenbeck process in $D([0,T],\mc S'_W(\bb T^d))$ (and it does not depend on $\lambda$). By uniqueness of the generalized Ornstein-Uhlenbeck processes (also due to Theorem \ref{t5}), the sequence $\{Q_N^\lambda\}_{N\ge1}$ has at most one limit point, and from tightness, it does have a unique limit point. This concludes the proof of Theorem \ref{4.2}.

\subsection{Generalized Ornstein-Uhlenbeck Processes}
In this subsection we show that the generalized Ornstein-Uhlenbeck process obtained as the solution martingale problem which we are interested, is also a $\mc S_W'(\bb T^d)$-solution of a stochastic differential equation, and then we apply the theory in Appendix to conclude that there is at most one solution of the martingale problem. Moreover, we also conclude that this process is a Gaussian process.

\begin{theorem}
\label{t5}
 Let $Y_0$ be a Gaussian field on $\mc S_W'(\bb T^d)$, and let for any $G\in \mc S_W(\bb T^d)$
\begin{equation}\label{idintegral}
   M_t(G) = Y_t(G) - Y_0(G) - \phi'(\rho) \int_0^t  Y_s(\nabla A\nabla_W G) ds
\end{equation}
 be a martingale of quadratic variation 
 \begin{equation}\label{vq-m}
 \<M_t(G)\> =2t\chi(\rho)\phi'(\rho)\sum_{j=1}^d \int_{\bb T^d} a_{jj}\left(\partial_{W_j}G\right)^2 d(x^j\otimes W_j).
 \end{equation}
 Then $Y_t$ is the unique $\mc S_W'(\bb T^d)$-solution of the stochastic differential equation
\begin{equation}\label{ec7}
d Y_t = \phi'(\rho)\nabla A\nabla_W Y_t dt + \sqrt{2\chi(\rho)\phi'(\rho) A} d N_t,
\end{equation}
where $N_t$ is a mean-zero $\mc S_W'(\bb T^d)$-valued martingale with quadratic variation given by
$$\<N(G)\>_t = t\sum_{j=1}^d\int_{\bb T^d} \left[\partial_{W_j} G\right]^2 d(x^j\otimes W_j).$$ 
 Moreover, $Y_t$ is a Gaussian process.
\end{theorem}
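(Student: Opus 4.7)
The plan is to prove Theorem~\ref{t5} in four steps: (i)~construct a driving noise $N_t \in \mc S_W'(\bb T^d)$ from the given martingale $M_t$; (ii)~recognize the resulting martingale identity as the integrated form of the SDE \eqref{ec7}; (iii)~invoke the abstract existence-uniqueness theorem for SDEs on the dual of a countably Hilbert nuclear space from the Appendix; and (iv)~deduce Gaussianity of $Y_t$ via a L\'evy-type characterization of $N_t$.

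For step~(i), observe that the homogenized matrix $A=\mathrm{diag}(a_{11},\ldots,a_{dd})$ is constant with $a_{jj}>0$ by Theorem~\ref{homoge}, so $\sqrt{2\chi(\rho)\phi'(\rho)A}$ is a well-defined invertible diagonal matrix. I would define $N_t$ by inverting the prescribed relationship: decompose $M_t$ along the orthonormal basis of eigenvectors $\{\varphi_j\}$ of $\mc L_W$ furnished by Proposition~\ref{t01}, and set the scalar martingales $N_t(\varphi_j)$ to be the unique continuous martingales satisfying $dM_t(G) = \sqrt{2\chi(\rho)\phi'(\rho)A}\,dN_t(G)$ componentwise along the decomposition induced by $\nabla_W$. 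Using \eqref{vq-m} together with the polarization identity and continuity in $\mc S_W(\bb T^d)$, one verifies that the resulting process $N_t$ is a genuine $\mc S_W'(\bb T^d)$-valued martingale whose quadratic variation is
$$
\<N(G)\>_t \;=\; t\sum_{j=1}^d\int_{\bb T^d}\bigl[\partial_{W_j}G\bigr]^2\,d(x^j\otimes W_j),
$$
which is the quadratic variation announced in the theorem.

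For step~(ii), substituting this representation of $M_t$ into \eqref{idintegral} immediately yields the integrated form of the SDE \eqref{ec7}. For step~(iii), Lemma~\ref{continua} shows that the drift operator $\phi'(\rho)\nabla A \nabla_W$ belongs to $\mc L(\mc S_W(\bb T^d),\mc S_W(\bb T^d))$, and an analogous estimate at the level of the seminorms $\|\cdot\|_n$ shows that $\sqrt{2\chi(\rho)\phi'(\rho)A}\nabla_W$ maps $\mc S_W(\bb T^d)$ continuously into a product of Hilbertian target spaces compatible with the quadratic variation of $N_t$. These continuity properties are exactly those required by the general existence-uniqueness theorem for $\mc S_W'(\bb T^d)$-valued SDEs from the Appendix, which yields uniqueness of the $\mc S_W'(\bb T^d)$-solution $Y_t$.

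Finally, for step~(iv), since $\<N(G)\>_t$ is deterministic and linear in $t$, L\'evy's characterization identifies $N_t(G)$ with a time-deformed standard Brownian motion for each $G \in \mc S_W(\bb T^d)$, so $N_t$ is a Gaussian $\mc S_W'(\bb T^d)$-valued process with independent increments. Because $Y_0$ is a Gaussian field independent of $N_\cdot$ and \eqref{ec7} is linear, the Duhamel (variation of constants) formula applied to the semigroup generated by $\phi'(\rho)\nabla A\nabla_W$ (produced by Lemma~\ref{semi} adapted to this closed dissipative operator) expresses $Y_t$ as a continuous linear functional of Gaussian data, hence $Y_t$ is itself Gaussian. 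I expect the main obstacle to be the rigorous construction of $N_t$ in step~(i): producing it as a genuine $\mc S_W'(\bb T^d)$-valued martingale rather than merely a family of scalar martingales indexed by test functions requires careful verification that the inversion encoded by $\sqrt{2\chi(\rho)\phi'(\rho)A}$ is compatible with the nuclear topology, and that the QV identity passes to the full space by density and continuity on each seminorm $\|\cdot\|_n$.
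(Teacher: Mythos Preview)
Your overall strategy---verify the integral identity, apply the abstract existence--uniqueness result from the Appendix (Proposition~\ref{eu-spde}), and then use L\'evy's characterization plus linearity to get Gaussianity---is exactly the paper's route. However, two points deserve comment.

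First, your step~(i) is an unnecessary detour. The abstract SDE in the Appendix is of the form $d\xi_t = A'\xi_t\,dt + dM_t$, with the driving martingale $M_t$ appearing directly; Proposition~\ref{eu-spde} is stated and proved for this form. The identity \eqref{idintegral} already matches this exactly, with $M_t$ itself as the noise. Thus the paper never constructs $N_t$ as a separate $\mc S_W'(\bb T^d)$-valued object: the SDE \eqref{ec7} with $N_t$ is simply a rewriting of the SDE with $M_t$, and the ``main obstacle'' you anticipate (inverting $\sqrt{2\chi(\rho)\phi'(\rho)A}$ compatibly with the nuclear topology) does not arise. Condition~2 of Proposition~\ref{eu-spde} is checked directly from \eqref{vq-m} via Remark~\ref{remark}, and Condition~3 via Lemmas~\ref{c01} and~\ref{continua}.

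Second, in step~(iv) you assert that $Y_0$ is independent of $N_\cdot$ without justification. This is not part of the hypotheses. The paper supplies the missing argument: since $Y_t$ is an additive functional of a Markov process, Blumenthal's $0$--$1$ law gives independence of $M_t$ and $Y_0$. Once that is in hand, L\'evy's characterization applied to $M_t$ (deterministic quadratic variation linear in $t$) shows $M_t$ is Gaussian with independent increments, and then Proposition~\ref{gaussian} concludes Gaussianity of $Y_t$; your Duhamel argument is equivalent but the paper invokes the ready-made proposition.
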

\begin{proof}
In view of definition of solutions in Appendix, $Y_t$ is a  $\mc S_W'(\bb T^d)$-solution of \eqref{ec7}. In fact, by hypothesis $Y_t$ satisfies the integral identity \eqref{idintegral}, and is also an additive functional of a Markov process.

We now check the conditions in Proposition \ref{eu-spde} to ensure uniqueness of $\mc S_W'(\bb T^d)$-solutions of \eqref{ec7}.
Since by hypothesis $Y_0$ is a Gaussian field, condition 1 is satisfied, and since the martingale $M_t$ has the quadratic variation given by \eqref{vq-m}, we use Remark \ref{remark} to conclude that condition 2 holds. Condition 3 follows from Lemmas 
\ref{c01} and \ref{continua}.
Therefore $Y_t$ is unique.

Finally, by Blumenthal's 0-1 law for Markov processes, $M_t$ and $Y_0$ are independent. Applying Lévy's martingale characterization of Brownian motions, the quadratic variation of $M_t$, given by \eqref{vq-m}, yields that $M_t$ is a time deformation of a Brownian motion. Therefore, $M_t$ is a Gaussian process with independent increments. Since $Y_0$ is a Gaussian field, we apply Proposition \ref{gaussian} to conclude that $Y_t$ is a Gaussian process in $D([0,T],S_W'(\bb T^d))$.
\end{proof}

\section{Tightness}\label{tig}
In this section we prove tightness of the density fluctuation field $\{Y_\cdot^N\}_N$ introduced in Section \ref{sec2}. We begin by stating Mitoma's criterion \cite{Mit}:

\begin{proposition}
Let $\Phi_\infty$ be a nuclear Fr\'echet space and $\Phi_\infty'$ its topological dual. Let $\{Q^N\}_N$ be a sequence of distributions in $D([0,T],\Phi_\infty')$, and for a given function $G \in \Phi_\infty$, let $Q^{N,G}$ be the distribution in  $D([0,T],\bb R)$ defined by $Q^{N,G}\left[y\in D([0,T],\bb R); y(\cdot) \in A\right] = Q^{N}\left[Y\in D([0,T],\Phi_\infty'); Y(\cdot)(G) \in A\right]$. Therefore, the sequence $\{Q^N\}_N$ is tight if and only if  $\{Q^{N,G}\}_N$ is tight for any $G \in \Phi_\infty$.  
\end{proposition}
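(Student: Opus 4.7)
The plan is to prove the two directions separately; the forward implication is immediate, while the converse is the substance of Mitoma's criterion. For the $(\Rightarrow)$ direction, for each fixed $G\in\Phi_\infty$ the evaluation map $\pi_G\colon D([0,T],\Phi_\infty')\to D([0,T],\bb R)$, $Y\mapsto Y(\cdot)(G)$, is continuous because point evaluation at $G$ is continuous on $\Phi_\infty'$ equipped with the strong topology. Hence $Q^{N,G}=Q^N\circ\pi_G^{-1}$ and tightness of $\{Q^N\}$ transfers to tightness of $\{Q^{N,G}\}$ by the continuous mapping theorem.

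For the $(\Leftarrow)$ direction I would exploit the nuclear Fr\'echet structure. Write $\Phi_\infty=\bigcap_n\Phi_n$ as the projective limit of the Hilbert spaces $\Phi_n$, with duals $\Phi_{-n}$ embedded in $\Phi_\infty'$. By nuclearity, for every $n$ there exists $m>n$ such that the inclusion $\Phi_m\hookrightarrow\Phi_n$ is Hilbert-Schmidt; dually $\Phi_{-n}\hookrightarrow\Phi_{-m}$ is Hilbert-Schmidt, so norm-bounded balls in $\Phi_{-n}$ are relatively compact in $\Phi_{-m}$. Fix an orthonormal basis $\{\varphi_k\}_{k\ge 1}$ of $\Phi_n$ (which, under the Hilbert-Schmidt embedding into $\Phi_m$, satisfies $\sum_k\|\varphi_k\|_{m}^{2}<\infty$). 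By hypothesis each $\{Q^{N,\varphi_k}\}_N$ is tight in $D([0,T],\bb R)$, so for every $\varepsilon>0$ there exist constants $C_k<\infty$ with
\begin{equation*}
Q^N\Bigl(\sup_{t\le T}|Y_t(\varphi_k)|\le C_k\Bigr)\ge 1-\varepsilon\, 2^{-k},\qquad N\ge 1.
\end{equation*}
Choosing $C_k$ so that $\sum_k C_k^{2}\|\varphi_k\|_{-m}^{2}<\infty$ (possible because the $\|\varphi_k\|_{-m}$ are square-summable) then shows that, with probability at least $1-\varepsilon$, the trajectory $Y_\cdot^N$ lies in a fixed ball of $\Phi_{-m}$, hence in a relatively compact subset of $\Phi_{-m'}$ for some larger $m'$.

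To upgrade this compact containment to tightness in the Skorohod space $D([0,T],\Phi_\infty')$, I would invoke Aldous' criterion coordinate-wise: tightness of $\{Q^{N,\varphi_k}\}_N$ yields uniform control of the modulus of continuity of $Y_\cdot^N(\varphi_k)$. Combining these controls over $k$ via the Hilbert-Schmidt summability, together with the compact-containment bound above, produces compact sets $K_\varepsilon\subset D([0,T],\Phi_{-m'})$ with $Q^N(K_\varepsilon)\ge 1-\varepsilon$ uniformly in $N$. Since $\Phi_{-m'}\hookrightarrow\Phi_\infty'$ continuously, the same sets are compact in $D([0,T],\Phi_\infty')$, which gives tightness of $\{Q^N\}_N$ by Prokhorov's theorem.

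The main obstacle, and the step where nuclearity is essential, is the promotion from coordinate-wise Aldous controls to a genuine compact containment in a single Hilbertian dual $\Phi_{-m'}$; without the Hilbert-Schmidt embedding the individual bounds $C_k$ could not be summed into a norm bound. Once that step is in place, the modulus-of-continuity half of Aldous' criterion transfers routinely because the Skorohod metric on $D([0,T],\Phi_{-m'})$ can be estimated by uniform moduli along any orthonormal basis of $\Phi_{m'}$.
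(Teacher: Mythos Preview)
The paper does not prove this proposition: it is stated as Mitoma's criterion and attributed to \cite{Mit} without argument, so there is no ``paper's own proof'' to compare against. Your sketch is therefore an attempt to reconstruct Mitoma's theorem rather than to reproduce anything in the present article.

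That said, your sketch has a genuine gap in the compact-containment step. You write that one may choose the constants $C_k$ so that $\sum_k C_k^{2}\|\varphi_k\|_{-m}^{2}<\infty$, ``possible because the $\|\varphi_k\|_{-m}$ are square-summable.'' But the $C_k$ are \emph{not} free parameters: each $C_k$ is forced on you by the tightness of $\{Q^{N,\varphi_k}\}_N$ and the required probability bound $1-\varepsilon\,2^{-k}$, and nothing in the hypothesis prevents $C_k$ from growing so fast that $C_k^{2}\|\varphi_k\|_{-m}^{2}$ fails to be summable. Square-summability of the weights $\|\varphi_k\|_{-m}$ alone does not let you absorb an uncontrolled sequence $C_k$. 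Mitoma's actual argument avoids this by first proving, via a uniform-boundedness (Baire category) argument on the nuclear space, that there exists a \emph{single} index $p$ for which $\sup_N Q^N(\sup_{t\le T}\|Y_t\|_{-p}>R)\to 0$ as $R\to\infty$; only after this uniform localisation in one $\Phi_{-p}$ does the Hilbert--Schmidt step come in to produce compactness in a larger $\Phi_{-m}$. Your outline inverts these steps and thereby loses the crucial uniformity. There is also a minor slip: an orthonormal basis $\{\varphi_k\}$ of $\Phi_n$ need not lie in $\Phi_m$ for $m>n$ (since $\Phi_m\subset\Phi_n$), so the phrase ``under the Hilbert--Schmidt embedding into $\Phi_m$'' should be replaced by a basis of $\Phi_m$ whose $\Phi_n$-norms are square-summable.
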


From Mitoma's criterion, $\{Y_\cdot^N\}_N$ is tight if and only if $\{Y_\cdot^N(G)\}_N$ is tight for any $G \in \mc S_W(\bb T^d)$, since $\mc S_W(\bb T^d)$ is a nuclear Fréchet space. By Dynkin's formula and after some manipulations, we see that
\begin{eqnarray}
\!\!\!\!\!\!\!\!\!\!\!\!\!\!Y_t^N(G) &=& Y_0^N(G)
\int_0^t \sum^d_{j=1}\big \{\frac {1}{N^{d/2}} \sum_{x\in \bb T^d_N} {\bb L}_N^j G_N(x/N)\, \eta_s(x)\nonumber
\\
\!\!\!\!\!\!\!\!\!\!\!\!\!\! && \quad
+\; \frac{b}{N^{d/2}} \sum_{x\in \bb T^d_N} \big [ {\bb L}_N^j G_N
((x+e_j)/N) + {\bb L}_{N}^j G_N (x/N) \big ] \,
(\tau_x h_{1,j}) (\eta_s) \nonumber\\
\!\!\!\!\!\!\!\!\!\!\!\!\!\! && \qquad
- \; \frac{b}{N^{d/2}} \sum_{x\in \bb T^d_N} {\bb L}_N^j G_N
(x/N) (\tau_x h_{2,j}) (\eta_s)\big \}ds\; + M_t^N(G),\label{eq08}
\end{eqnarray}
where $M_t^N(G)$ is a martingale of quadratic variation 
\begin{align*}\label{vq}
 \<M^N(G)\>_t = \frac{1}{N^{d-1}}\sum^d_{j=1}\sum_{x\in \bb T^d}a_{jj}^N&[\partial_{W_j}^N G_N(x/N)]^2[W((x+e_j)/N) - W(x/N)]\times\\
&\times\int_0^t c_{x,x+e_j}(\eta_s) \, [\eta_s(x+e_j) - \eta_s(x)]^2 \, ds\;.
\end{align*}

In order to prove tightness for the sequence $\{Y_\cdot^N(G)\}_N$, it is enough to prove tightness for $\{Y_0^N(G)\}_N$, $\{M_\cdot^N(G)\}_N$ and the integral term in (\ref{eq08}). The easiest one is the initial condition: from the usual central limit theorem, $Y_0^N(G)$ converges to a normal random variable of mean zero and variance $\chi(\rho) \int G(x)^2 dx$, where $\chi(\rho) = \rho(1-\rho)$. For the other two terms, we use {\em Aldous' criterion}:

\begin{proposition}[Aldous' criterion]
\label{p3}
A sequence of distributions $\{P^N\}$ in the path space $D([0,T],\bb R)$ is tight if:

\begin{itemize}
\item[i)] For any $t \in [0,T]$ the sequence $\{P_t^N\}$ of distributions in $\bb R$ defined by $P_t^N(A) = P^N\left[y \in D([0,T],\bb R); y(t) \in A\right]$ is tight,

\item[ii)] For any $\epsilon >0$,
\[
\lim_{\delta>0} \limsup_{n \to \infty} \sup_{\substack{\tau \in \Upsilon_T\\ \theta \leq \delta}} P^N\big[y\in D([0,T],\bb R);|y(\tau+\theta)-y(\tau)| >\epsilon\big] =0,
\]
\end{itemize}
where $\Upsilon_T$ is the set of stopping times bounded by $T$  and $y(\tau+\theta)=y(T)$ if $\tau+\theta>T$.
\end{proposition}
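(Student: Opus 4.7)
The strategy is to reduce tightness on $D([0,T],\bb R)$ to two concrete conditions via Billingsley's classical characterization, and then verify each from the hypotheses.

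First, I would invoke the characterization that $\{P^N\}$ is tight on $D([0,T],\bb R)$ if and only if (a) for each $t$ in a dense subset of $[0,T]$ the marginal distributions $\{P_t^N\}$ are tight in $\bb R$, and (b) for every $\epsilon>0$,
$$\lim_{\delta\downarrow 0}\,\limsup_{N\to\infty}\,P^N\bigl[\,w'(y,\delta)\geq\epsilon\,\bigr]\;=\;0,$$
where $w'(y,\delta)$ is Skorohod's modulus of continuity, namely the infimum, over partitions of $[0,T]$ with mesh at least $\delta$, of the largest oscillation of $y$ on a partition block. Condition (a) is exactly hypothesis (i), so only (b) requires work.

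For (b) I would argue by contraposition, extracting bad stopping times from trajectories with large modulus. The key geometric fact is that if $w'(y,\delta)\geq\epsilon$ then on some window of length at most $2\delta$ the path exhibits \emph{two} oscillations of size at least $\epsilon/2$: a single isolated jump can always be absorbed by the partition defining $w'$, but two cannot. From this one constructs a stopping time $\tau\in\Upsilon_T$ and an increment $\theta\leq 2\delta$ with $|y(\tau+\theta)-y(\tau)|\geq\epsilon/2$. Hypothesis (ii), applied with $2\delta$ and $\epsilon/2$ in place of $\delta$ and $\epsilon$, then forces $\limsup_N P^N[w'(y,\delta)\geq\epsilon]\to 0$ as $\delta\to 0$, which gives (b).

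The main obstacle will be the careful construction of $\tau$ capturing a double oscillation, rather than just one large jump. A naive stopping time at the first exit from a tube of radius $\epsilon/2$ controls only the stronger modulus $w(y,\delta)$, which need not vanish on general c\`adl\`ag paths and so does \emph{not} suffice. The Aldous device is to partition $[0,T]$ into blocks of length $\delta$, stop at the first time inside a block at which the oscillation relative to the block's left endpoint already exceeds $\epsilon/2$, and then use the assumption on $w'$ to force a \emph{second} exceedance within a further $\theta\leq 2\delta$; it is this second increment that feeds hypothesis (ii). Once the stopping times are defined correctly, the remaining step is a routine union bound over the $O(1/\delta)$ blocks, together with the observation that $\tau+\theta\leq T$ so that both $\tau$ and the truncated stopping time $\tau\wedge T$ lie in $\Upsilon_T$ as required by the statement.
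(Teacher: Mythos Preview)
The paper does not give a proof of this proposition: Aldous' criterion is stated as a classical result and then immediately applied to the martingale and integral terms, so there is no ``paper's proof'' to compare against.

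Your sketch is along the right general lines (reduce to control of the Skorohod modulus $w'$, then manufacture stopping times), but the closing step as written has a gap. You propose a union bound over $O(1/\delta)$ blocks of width $\delta$; for that to succeed you would need the Aldous probability on each block to be $o(\delta)$, whereas hypothesis~(ii) only gives $o(1)$ as $\delta\to 0$. The standard argument avoids this by constructing \emph{random} stopping times $\tau_0=0$, $\tau_{k+1}=\inf\{t>\tau_k:|y(t)-y(\tau_k)|>\epsilon\}$ and first using marginal tightness~(i) to confine the path to a compact set with high probability; this forces the number of such $\tau_k$'s in $[0,T]$ to be bounded by a constant depending on $\epsilon$ but \emph{not} on $\delta$. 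One then applies~(ii) to each pair $(\tau_k,\tau_{k+1})$ to show $P[\tau_{k+1}-\tau_k<\delta]$ is small, and the union bound is over this $\delta$-independent family. With that correction the argument goes through; see, e.g., Billingsley, \emph{Convergence of Probability Measures} (2nd ed.), Theorem~16.10, or Jacod--Shiryaev, \emph{Limit Theorems for Stochastic Processes}, VI.4.5.
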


Now we prove tightness of the martingale term. By the optional sampling theorem, we have

\begin{align}
Q_N \big[ \big|&M_{\tau+\theta}^N(G)- M_\tau^N(G)\big|> \epsilon\big] 
	 \leq \frac{1}{\epsilon^2} E_{Q_N} \big[ \big\< M_{\tau+\theta}^N(G)\big\> - \big\< M_\tau^N(G)\big\>\big]\nonumber\\
	& =\frac{1}{\epsilon^2}\big[ \big\< M_{\tau+\theta}^N(G)\big\> - \big\< M_\tau^N(G)\big\>\big]\nonumber\\
	&=\frac{1}{\epsilon^2 N^{d-1}} \sum_{j=1}^d \sum_{x\in\bb T_N^d} a_{jj}(x) [\partial_{W_j}^N G(x/N)]^2[W((x+e_j)/N)-W(x)]\nonumber\\
	&\times \int_t^{t+\delta} c_{x,x+e_j}(\eta_s) [\eta_s(x+e_j) - \eta_s(x)]^2 ds\nonumber\\
	&\leq \frac{\delta}{\epsilon^2} (1+2|b|)\theta\frac{1}{N^{d-1}} \sum_{j=1}^d \sum_{x\in \bb T_N^d} [\partial_{W_j}^N G(x/N)]^2[W((x+e_j)/N)-W(x)]\label{ener1}\\
&\leq \frac{\delta}{\epsilon^2}(1+2|b|)\theta(\|\nabla_W G\|_W^2 +\delta),\nonumber
\end{align}
for $N$ sufficiently large, since the rightmost term on \eqref{ener1} converges to $\|\nabla_W G\|_W^2$, as $N\to\infty$.
Therefore, the martingale $M_t^N(G)$ satisfies the conditions of Aldous' criterion. The integral term can be handled in a similar way:

\begin{eqnarray*}
E_{Q_N}\Big[ &\Big(& \int_\tau^{\tau+\delta} \frac{1}{N^{d/2}} \sum_{j=1}^d \sum_x \Big\{ {\bb L}_N^jG(x/N)(\eta_t-\rho)\\
&+& b[{\bb L}_N^j G((x+e_j)/N) + {\bb L}_N^jG(x/N)](\tau_x h_1-\rho^2)\\
&-& b{\bb L}_N^j G(x/N) (\tau_x h_2 - \rho^2)\Big)^2dt\Big]\\
&\leq& \delta C(b) \frac{1}{N^d} \sum_{j=1}^d\sum_{x\in\bb T_N^d} \left({\bb L}_N^j G(x/N)\right)^2\\
&\leq& \delta C(G,b),
\end{eqnarray*}
where $C(b)$ is a constant that depends on $b$, and $C(G,b)$ is a constant that depends on $C(b)$ and on the function $G\in \mc S_W(\bb T^d)$. Therefore, we conclude, by Mitoma's criterion, that the sequence $\{Y_\cdot^N\}_N$ is tight. Thus, the sequence of $\mc S_W'(\bb T^d)$-valued martingales $\{M_\cdot^N\}_N$ is also tight.

\section{Boltzmann-Gibbs Principle}
\label{s4}
We show in this section that the martingales $M_t^N(G)$ introduced in Section \ref{ef} can be expressed in terms of the fluctuation fields $Y_t^N$. This replacement of the cylinder function $(\tau_x h_{i,j})(\eta_s) - \rho^2$ by $2\rho[\eta_s(x)-\rho]$ for $i=1,2$, constitutes one of the main steps toward the proof of equilibrium fluctuations.

Recall that $(\Omega, \mc F, \mu)$ is a standard probability space which we consider defined the vector-valued $\mc F$-measurable functions $\{a_j(\omega); j =\ldots, d\}$, these functions form our random environment (see Sections \ref{sec2} and \ref{ef} for more details), and $\{0,1\}^{\bb T^d_N}$ the space of configurations  on $\bb T^d_N$, ie, $\{\eta; \eta(x)\in \{0,1\},\ x\in \bb T^d_N\}$.

Take a function $f: \Omega \times \{0,1\}^{\bb T^d_N} \to \bb R$. Fix a typical realization $\omega \in \Omega$, and let $x \in \bb T_N^d$, define
\[
f(x,\eta)=f(x, \eta,\omega) =: f(T_{Nx}\omega, \tau_x\eta),
\]
where $\tau_x\eta$ is the shift of $\eta$ to $x$: $\tau_x \eta(y) = \eta(x+y)$. 

We say that $f$ is local if there exists $R>0$ such that $f(\omega,\eta)$ depends only on the values of $\eta(y)$ for $|y| \leq R$. In this case, we can consider $f$ as defined in all the spaces $\Omega \times \{0,1\}^{\bb T^d_N}$ for $N \geq R$.

 We say that $f$ is Lipschitz if there exists $c=c(\omega) >
0$ such that for all $x$, $|f(\omega, \eta)-f(\omega,\eta')| \leq c|\eta(x)-\eta'(x)|$ for any $\eta,\eta' \in \{0,1\}^{\bb T^d_N}$ such that $\eta(y) =\eta'(y)$ for
any $y \neq x$. If the constant $c$ can be chosen independently of $\omega$, we say that $f$ is uniformly Lipschitz.

\begin{theorem}{(Boltzmann-Gibbs principle)}
\label{th:bg}

For every $G\in \mc S_W(\bb T^d)$, every $t>0$ and every local, uniformly Lipschitz function $f: \Omega \times \{0,1\}^{\bb T^d_N} \to \bb R$,
\begin{equation} \label{expBG}
\lim_{N\rightarrow{\infty}}{E}_{\nu_{\rho}}\Big[\int_{0}^{t} \frac{1}{N^{d/2}} \sum_{x\in{\mathbb{T}_{N}^{d}}} G(x)V_{f}(x,\eta_s)ds
\Big]^{2}=0
\end{equation}
where
\begin{equation*}
V_{f}(x,\eta) =f(x,\eta)-E_{\nu_{\rho}}\big[f(x,\eta)\big]-\partial_{\rho}
E\Big[\int
f(x,\eta)d\nu_{\rho}(\eta)\Big]\big(\eta(x)-\rho\big).
\end{equation*}
Here, $E$ denotes the expectation with respect to $P$, the random environment.
\end{theorem}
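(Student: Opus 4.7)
\medskip

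\noindent\textbf{Proof plan for Theorem \ref{th:bg}.} Since $\nu_\rho$ is reversible for the accelerated generator $N^2 L_N$, the starting point is the Kipnis--Varadhan variational inequality: for any mean-zero (with respect to $\nu_\rho$) cylinder function $V:\{0,1\}^{\bb T^d_N}\to\bb R$,
\begin{equation*}
E_{\nu_\rho}\Big[\Big(\int_0^t V(\eta_s)\,ds\Big)^2\Big] \;\le\; 20\,t\,\sup_{h}\Big\{ 2\<V,h\>_{\nu_\rho} - \<h, -N^2 L_N h\>_{\nu_\rho}\Big\}.
\end{equation*}
Applied to $V(\eta)=N^{-d/2}\sum_x G(x) V_f(x,\eta)$, this reduces \eqref{expBG} to showing that the corresponding $H^{-1}$-norm (with respect to the symmetric Dirichlet form of the process) vanishes as $N\to\infty$. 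Observe that $V_f(x,\eta)$ is already centered in $\nu_\rho$ and orthogonal to $\eta(x)-\rho$ in $L^2(\nu_\rho)$ by construction, which is the crucial cancellation that allows the bound.

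\medskip

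\noindent The main steps are then the standard \emph{one-block} and \emph{equivalence of ensembles} reductions, adapted to the present setting. First, I would fix a mesoscopic scale $\ell=\ell(N)\to\infty$ with $\ell/N\to 0$ and introduce the block average $\eta^\ell(x):=\ell^{-d}\sum_{|y-x|\le\ell}\eta(y)$, and decompose
\begin{equation*}
V_f(x,\eta) \;=\; \bigl[f(x,\eta) - E_{\nu_\rho}[f(x,\eta)\mid \eta^\ell(x)]\bigr] \;+\; \bigl[E_{\nu_\rho}[f(x,\eta)\mid \eta^\ell(x)] - E_{\nu_\rho}[f(x,\eta)] - \tilde f'(\rho)(\eta(x)-\rho)\bigr],
\end{equation*}
where $\tilde f(\alpha):=E\,E_{\nu_\alpha}[f(x,\eta)]$. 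The first bracket is handled by the one-block estimate: the Kipnis--Varadhan bound, together with the spectral gap of the symmetric simple exclusion on a box of side $\ell$ and the lower bound $\xi_{x,x+e_j}\ge \theta^{-1}/(N\sup_k\Delta W_k)$ on the conductances, yields a bound of order $C(\rho,f)\,\ell^2/N^2$ on this piece. The second bracket is purely a function of $\eta^\ell(x)$: by equivalence of ensembles and a Taylor expansion around $\rho$, using that $\eta^\ell(x)-\rho$ is $O(\ell^{-d/2})$ in $L^2(\nu_\rho)$ and that by construction the first-order term in the expansion matches the subtracted linear piece, one reduces to a quadratic remainder whose $L^2$ contribution is of order $\ell^{-d}$.

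\medskip

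\noindent A separate point is the random environment: since $f(x,\eta)=f(T_{Nx}\omega,\tau_x\eta)$, both the centering and the coefficient $\tilde f'(\rho)$ in $V_f$ are averaged with respect to the environment via the outer expectation $E$. Here I would invoke ergodicity of the group $\{T_x\}$ on $(\Omega,\mc F,\mu)$ to justify replacing the sample mean $N^{-d}\sum_x E_{\nu_\rho}[f(x,\eta)](\omega)$ by its environmental average, with error that is $o(1)$ almost surely in $\omega$; the same ergodic theorem controls the sample mean of $\tilde f'_\omega(\rho)$. Combining the environmental average with the one-block and equivalence-of-ensembles estimates, and optimizing over $\ell$ (e.g.\ $\ell=N^{\beta}$ for suitable $\beta\in(0,1)$), yields \eqref{expBG}.

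\medskip

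\noindent The main obstacle I expect is the interplay between the random environment and the singular conductances $W$: on the one hand, the lower bound on the Dirichlet form degenerates in $N$ through the factor $N[W_j((x_j+1)/N)-W_j(x_j/N)]$, which is of order $1$ on typical bonds but can be large near discontinuities of $W_j$; on the other hand, the ergodic replacement of environmental averages only holds almost surely in $\omega$ and must be coupled uniformly to the spatial one-block estimate. Both difficulties are bypassed by separating scales: choose $\ell$ large enough that the equivalence-of-ensembles error is small, but also small enough that the worst-case Dirichlet form constants on boxes of side $\ell$ remain bounded by an $\omega$-independent constant coming from the ellipticity $\theta^{-1}\le a_j\le\theta$ and from the periodicity of $W$.
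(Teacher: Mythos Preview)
Your plan follows the standard Boltzmann--Gibbs strategy (Kipnis--Varadhan bound, block decomposition, equivalence of ensembles, ergodic theorem for the environment), and the overall architecture is close to the paper's. But there are two points where your execution diverges from the paper in ways that matter.

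\textbf{First, a genuine slip.} You assert that your second bracket
\[
E_{\nu_\rho}\bigl[f(x,\eta)\,\big|\,\eta^\ell(x)\bigr] - E_{\nu_\rho}[f(x,\eta)] - \tilde f'(\rho)\bigl(\eta(x)-\rho\bigr)
\]
is ``purely a function of $\eta^\ell(x)$''. It is not: the subtracted linear piece involves $\eta(x)-\rho$, not $\eta^\ell(x)-\rho$. Consequently the Taylor expansion of $E_{\nu_\rho}[f(x,\eta)\,|\,\eta^\ell(x)]\approx\psi(x,\eta^\ell(x))$ around $\rho$ gives a first-order term $\partial_\rho\psi(x,\rho)(\eta^\ell(x)-\rho)$ (with the \emph{quenched} derivative, not the annealed one), which does \emph{not} match the subtracted $\tilde f'(\rho)(\eta(x)-\rho)=E[\partial_\rho\psi(x,\rho)](\eta(x)-\rho)$. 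You therefore owe two separate replacements: $\eta(x)\to\eta^\ell(x)$ in the linear part, and $\partial_\rho\psi(x,\rho)\to E[\partial_\rho\psi(x,\rho)]$. You do invoke ergodicity for the latter, but the former is not addressed. The paper avoids this by a different organizing device: rather than conditioning, it subtracts $\tilde L_{B_j}h_j$ for arbitrary $h$ (with $\tilde L$ the generator \emph{without} conductances or environment), shows via the $H^{-1}$ bound and a Dirichlet-form comparison $\langle\cdot,-\tilde L_{B_j}\cdot\rangle\le C N^{-1}\langle\cdot,-L_{W,B_j}\cdot\rangle$ that this is harmless, and then takes the infimum over $h$. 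The infimum equals the projection of $\sum_{x\in B_j^0}V_f(x,\eta)$ onto $(\text{Range}\,\tilde L_{B_1})^\perp$, i.e.\ onto functions of $\eta^{B_1}$; since $E_{\nu_\rho}[\eta(x)-\rho\,|\,\eta^{B_1}]=\eta^{B_1}-\rho$, the replacement $\eta(x)\to\eta^{B_1}$ comes for free. This is cleaner than patching your decomposition.

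\textbf{Second, a structural difference.} The paper works with a \emph{fixed} box size $K$, sending $N\to\infty$ first and $K\to\infty$ afterwards, and never invokes a spectral-gap estimate on boxes. Instead the factor $1/N$ in the Dirichlet-form comparison above kills the $H^{-1}$ term directly, and the ergodic theorem (Birkhoff, then $L^2$) is applied at fixed $K$. Your scheme with $\ell=\ell(N)\to\infty$ and a spectral-gap one-block bound can also be made to work here (the accelerated rates $N^2\xi_{x,x+e_j}\ge \theta^{-1}N/\max_k(W_k(1)-W_k(0))$ give a gap $\gtrsim N/\ell^2$ on each box), but your claimed order $\ell^2/N^2$ for the one-block piece looks off by a factor of $N$; you should get $\ell^2/N$, which still vanishes for $\ell=o(N^{1/2})$. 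The fixed-$K$ route is arguably more robust because it decouples the environmental ergodic average (which needs many boxes, i.e.\ $N\to\infty$ at fixed $K$) from the local replacement.
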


Let $f: \Omega \times \{0,1\}^{\bb T^d_N} \to \bb R$ be a local, uniformly Lipschitz function and take  $f(x,\eta)=f(\theta_{Nx}\omega,\tau_x\eta)$. Fix a function $G\in{\mc S_W(\bb T^d)}$ and an integer $K$ that shall increase to $\infty$ after $N$. For each $N$, we subdivide $\mathbb{T}_N^d$ into non-overlapping boxes of linear size $K$. Denote them by $\{B_{j}, 1\leq{j}\leq{M}^{d}\}$, where $M=[\frac{2N}{K}]$. More precisely, 
$$B_j = y_j + \{1,\ldots,K\}^d,$$
where $y_{j}\in\bb T_N^d$, and $B_i\cap B_j=\emptyset$ if $i\neq j$. We assume that the points $y_j$ have the same relative position on the boxes.

Let $B_{0}$ be the set of
points that are not included in any $B_{j}$, then $|B_{0}|\leq{dKN^{d-1}}$. If we restrict the sum in the expression that appears
inside the integral in (\ref{expBG}) to the set $B_{0}$, then its $L^{2}_{\nu_\rho}(\bb T^d)$-norm clearly vanishes as
$N\rightarrow{+\infty}$.

Let $\Lambda_{s_{f}}$ be the smallest cube centered at the origin that contains the support of $f$ and define $s_f$ as the radius of
$\Lambda_{s_f}$. Denote by $B_{j}^{0}$ the interior of the box $B_{j}$, namely the sites $x$ in $B_{j}$ that are at a distance at least
$s_{f}+2$ from the boundary:
\begin{equation*}
B_{j}^{0}=\{x\in{B}_{j}, d(x,\mathbb{T}_{N}^{d}\setminus{B_{j}})>{s_{f}+2}\}.
\end{equation*}

Denote also by $B^{c}$ the set of points that are not included in any $B_{j}^{0}$. By construction, it is easy to see that
$|B^{c}|\leq{dN^{d}(\frac{c(f)}{K}+\frac{K}{N})}$, where $c(f)$ is a constant that depends on $f$. Recall that
$$V_{f}(x,\eta) =f(x,\eta)-E_{\nu_{\rho}}\big[f(x,\eta)\big]-\partial_{\rho}
E\Big[\int
f(x,\eta)d\nu_{\rho}(\eta)\Big]\big(\eta(x)-\rho\big)$$
 Thus, we have that for continuous $H:\bb T^d\to \bb R$,

\begin{multline*}
\frac{1}{N^{d/2}}\sum_{x\in{\mathbb{T}_{N}^d}}
    H(x)V_{f}(x,\eta_t)=
\frac{1}{N^{d/2}}\sum_{x\in{B}^{c}}H(x)V_{f}(x,\eta_t) +\\
    +\frac{1}{N^{d/2}}\sum_{j=1}^{M^d}\sum_{x\in{B}_{j}^{0}}\Big[H(x)-H(y_{j})\Big]V_{f}(x,\eta_t)
    +\frac{1}{N^{d/2}}\sum_{j=1}^{M^d}H(y_{j})\sum_{x\in{B_{j}^{0}}}V_{f}(x,\eta_t).
\end{multline*}
Note that we may take $H$ continuous, since the continuous functions are dense in $L^2(\bb T^d)$.
The first step is to prove that
\begin{equation*}
\lim_{K\rightarrow{\infty}}\lim_{N\rightarrow{\infty}}{E}_{\nu_{\rho}}
\Big[\int_{o}^{t}\frac{1}{N^{d/2}}\sum_{x\in{B}^{c}}H(x)V_{f}(x,\eta_t)ds\Big]^{2}=0.
\end{equation*}

Applying Cauchy-Schwartz inequality, since $\nu_{\rho}$ is an invariant product measure and since $V_{f}$ has mean zero with respect to the measure
$\nu_{\rho}$, the last expectation is bounded above by
\begin{equation*}
\frac{t^{2}}{N^{d}}\sum_{\substack{x,y\in{B^{c}}\\|x-y|\leq{2s_{f}}}}H(x)H(y) E_{\nu_{\rho}}\big[V_{f}(x,\eta)V_{f}(y,\eta)\big].
\end{equation*}

Since $V_{f}$ belongs to ${L}^{2}_{\nu_{\rho}}(\bb T^d)$ and
$|B^{c}|\leq{dN^{d}(\frac{c(f)}{K}+\frac{K}{N})}$, the last expression
vanishes by taking first $N\rightarrow{+\infty}$ and then
$K\rightarrow{+\infty}$.

From the continuity of $H$, and applying similar arguments, one may show that
\begin{equation*}
\lim_{N\rightarrow{\infty}}\mathbb{E}_{\nu_{\rho}}\Big[\int_{0}^{t}\frac{1}{N^{d/2}}\sum_{j=1}^{M^d}\sum_{x\in{B}_{j}^{0}}
\big[H(x)-H(y_{j})\big]V_{f}(x,\eta_t)ds\Big]^{2}=0.
\end{equation*}

In order to conclude the proof it remains to be shown that
\begin{equation*}
\lim_{K\rightarrow{\infty}}\lim_{N\rightarrow{\infty}} {E}_{\nu_{\rho}}\Big[\int_{0}^{t}\frac{1}{N^{d/2}}\sum_{j=1}^{M^d}H(y_{j})
\sum_{x\in{B_{j}^{0}}}V_{f}(x,\eta_t)ds\Big]^{2}=0.
\end{equation*}

To this end, let $\tilde{L}_{N}$ be the generator of the exclusion process without the random environment, and without the conductances (that is, taking $a(\omega) \equiv 1$, and $W_j(x_j)=x_j$, for $j=1,\ldots,d$, in (\ref{g4})),
and also without the diffusive scaling $N^2$
$$\tilde{L}_N g(\eta) = \sum_{j=1}^d\sum_{x\in \bb T^d_N}c_{x,x+e_j}(\eta)\big[g(\eta^{x,x+e_j})-g(\eta)\big],$$
for cylindric functions $g$ on the configuration space $\{0,1\}^{\bb T_N^d}$.

 For each $j=0,..,M^d$ denote by $\zeta_{j}$ the configuration $\{\eta(x), x \in{B_{j}}\}$ and by $\tilde{L}_{B_{j}}$ the restriction of the generator $\tilde{L}_{N}$ to the box $B_{j}$, namely:
\begin{equation*}
\tilde{L}_{B_{j}} h(\eta) = \sum_{\substack{ x,y \in B_j \\ |x-y|= 1/N}}c_{x,x+e_j}(\eta)\big[h(\eta^{x,x+e_j}-h(\eta)\big].
\end{equation*}

We would like to emphasize that we introduced the generator $\tilde{L}_N$ because it is translation invariant. 

Now we introduce some notation. Fix a local function $h: \Omega \times \{0,1\}^{\bb T^d_N} \to \bb R$, measurable with respect to $\sigma(\eta(x),x\in{B_{1}})$, such that $E[\int h(\omega,\eta)^2 d\nu_\rho] < \infty$ and let $h_{j}$ be the translation of $h$ by $y_j -y_0$:
$h_j(x,\eta) = h(\theta_{(y_j-y_0)N} \omega, \tau_{y_j-y_0} \eta)$. Denote by $L^2(\nu_\rho\otimes P)$ the set of such functions. Consider
\begin{equation*}
V_{H,h}^{N}(\eta)=\frac{1}{N^{d/2}}\sum_{j=1}^{M^d}H(y_{j}) \tilde{L}_{B_{j}}h_{j}(\zeta_{j}).
\end{equation*}

By proposition A 1.6.1 of \cite{kl} and the ellipticity assumption, it is not hard to show that
\begin{equation}\label{desigdual}
E_{\nu_\rho}\left[\int_0^t V_{H,h}^N(\eta_s) ds\right] \leq 20 \theta t\|V_{H,h}^N\|_{-1}^2,
\end{equation}
where $\| \cdot\|_{-1}$ is given  by
$$\| V\|_{-1}^2 = \sup_{F\in L^2(\nu_\rho)} \left\{2\int V(\eta)F(\eta)d\nu_\rho - \< F,L_NF\>_\rho \right\},$$
and $\<\cdot,\cdot\>_\rho$ denotes the inner product in $L^2(\nu_\rho)$. Moreover, $\eta_s$ is the evolution of the process with generator (in the box $B_j$):
$$L_{W,B_j} h(\eta) = \sum_{i=1}^d \sum_{x\in B_j} c_{x,x+e_j}(\eta) \frac{N a_i(x)}{W(x+e_i)-W(x)} [h(\eta^{x,x+e_i})-h(\eta)].$$

Since $L_{W,B_j}$ is a decomposition of $L_N$, we have that
$$\sum_{j=1}^{M^d} \< h,-L_{W,B_j} h\>_\rho \leq \<h,-L_N h \>_\rho.$$

Furthermore,
$$\<f, -\tilde{L}_{B_j} h \> \leq \max_{1\leq k\leq d} \frac{\{W_k(1)-W_k(0)\}}{N} \theta \<h,-L_{W,B_j}h\>_\rho.$$

Using the Cauchy-Schwartz inequality, we have, for each $j$,
$$\<\tilde{L}_{B_j}h_j, F \>_\rho \leq \frac{1}{2\gamma_j}\<-\tilde{L}_{B_j}h_j,h_j \>_\rho + \frac{\gamma_j}{2}\<F,-\tilde{L}_{B_j}F \>_\rho,$$
where $\gamma_j$ is a positive constant.

Therefore,

\begin{equation}\label{vgamma}
2\int V_{H,h}^N(\eta) F(\eta) d\nu_\rho \leq \frac{2}{N^{d/2}}\sum_{j=1}^{M^d} H(y_j) \left[\frac{1}{2\gamma_j}\<-\tilde{L}_{B_j}h_j,h_j \>_\rho+\frac{\gamma_j}{2} \<F,-\tilde{L}_{B_j}F \>_\rho \right].
\end{equation}

Choosing
$$\gamma_j = \frac{N^{1+d/2}}{\theta \max_{1\leq k \leq d}\{W_k(1)-W_k(0)\} |H(y_j)|}$$
Observe that the generator $L_N$ is already speeded up by the factor $N^2$.
We, thus, obtain
$$\frac{2}{N^{d/2}} \sum_{j=1}^{M^d} H(y_j) \frac{\gamma_j}{2} \<F,-\tilde{L}_{B_j}F \>_\rho \leq \<F,-\tilde{L}_NF \>_\rho,$$
where, from \eqref{desigdual} and \eqref{vgamma}, we obtain that the expectation in equation \eqref{desigdual} is bounded above by

$$\frac{20\theta t}{N^{d/2}} \sum_{j=1}^{M^d} \frac{|H(y_j)|}{\gamma_j} \<-\tilde{L}_{B_j}h_j,h_j \>_\rho,$$
which in turn is less than or equal to
$$\frac{20t\|H\|_{\infty} M^d\theta^2}{N^{d+1}\max_{1\leq k\leq d}\{W_k(1)-W_k(0)\}} \sum_{j=1}^{M^d}\frac{1}{M^d}\<-\tilde{L}_{B_j}h_j,h_j \>_\rho.$$
By Birkhoff's ergodic theorem, the sum in the previous expression converges to a finite value as $N \to \infty$. Therefore, this whole expression
vanishes as $N\rightarrow{\infty}$. To conclude the proof of the theorem we need to show that
\begin{multline*}
\lim_{K\rightarrow{\infty}}\inf_{h\in{L^{2}(\nu_{\rho}\otimes P)}} \\
\lim_{N\rightarrow{\infty}}{E}_{\nu_{\rho}}\Big[\int_{0}^{t}\frac{1}{N^{d/2}}
\sum_{j=1}^{M^d}H(y_{j})\Big\{\sum_{x\in{B_{j}^{0}}}V_{f}(x,\eta_s)-\tilde{L}_{B_{j}}h_{j}(\zeta_{j}(s))\Big\}\Big]^{2}=0.
\end{multline*}

By Cauchy-Schwartz inequality the expectation in the previous expression is bounded by
\begin{equation*}
\frac{t^{2}}{N^{d}}\sum_{j=1}^{M^d}||H||_\infty^{2} E_{\nu_{\rho}}\Big(\sum_{x\in{B_{j}^{0}}}V_{f}(x,\eta)- \tilde{L}_{B_{j}}h_{j}(\zeta_{j})\Big)^{2}
\end{equation*}
because the measure $\nu_{\rho}$ is invariant under the dynamics and also translation invariant and the supports of $V_{f}(x,\eta)-
\tilde{L}_{B_{i}}h_{i}(\zeta_{i})$ and $V_{f}(y,\eta)- \tilde{L}_{B_{j}}h_{j}(\zeta_{j})$ are disjoint for $x\in{B_{i}^{0}}$ and $y\in{B_{j}^{0}}$,
 with $i\neq{j}$.

By the ergodic theorem, as $N\rightarrow{\infty}$ this expression converges to
\begin{equation}
\frac{t^{2}}{K^{d}}||H||_\infty^2 E\Big[\int \Big(\sum_{x\in{B_{1}^{0}}}V_{f}(x,\eta)- \tilde{L}_{B_{1}}h(\omega,\eta)\Big)^{2} d\nu_\rho\Big].
\label{eq:bg2}
\end{equation}

So it remains to be shown that

\begin{equation*}
\lim_{K\rightarrow{\infty}} \frac{t^{2}}{K^{d}} ||H||_\infty^2
    \inf_{h\in{L^{2}(\nu_{\rho} \otimes P)}}
    E\Big[ \int \Big(\sum_{x\in{B_{1}^{0}}}V_{f}(x,\eta)
    - \tilde{L}_{B_{1}}h(\omega, \eta)\Big)^{2} d \nu_\rho \Big]=0.
\end{equation*}

Denote by ${R}(\tilde{L}_{B_{1}})$ the range of the generator $\tilde{L}_{B_{1}}$ in $L^{2}(\nu_{\rho} \otimes P)$ and by
${R}(\tilde{L}_{B_{1}})^{\perp}$ the space orthogonal to ${R}(\tilde{L}_{B_{1}})$. The infimum  of (\ref{eq:bg2}) over all
$h\in{L^{2}(\nu_{\rho} \otimes P)}$ is equal to the projection of $\sum_{x\in{B_{1}^{0}}}V_{f}(x,\eta)$ into ${R}(\tilde{L}_{B_{1}})^{\perp}$.

The set ${R}(\tilde{L}_{B_{1}})^{\perp}$ is the space of functions that depends on $\eta$ only through the total
number of particles on the box $B_1$. So, the previous expression is equal to
\begin{equation}
\label{ec9} \lim_{K \to \infty} \frac{t^{2}||H||_\infty^2}{K^{d}} E\Big[\int
\Big(E_{\nu_{\rho}}\Big[\sum_{x\in{B_{1}^{0}}}V_{f}(x,\eta)\Big|\eta^{B_{1}}\Big]\Big)^{2} d\nu_\rho \Big]
\end{equation}
where $\eta^{B_{1}}=K^{-d}\sum_{x\in{B_{1}}}\eta(x)$.

Let us call this last expression $\mc I_0$. Define $\psi(x,\rho) = E_{\nu_\rho}[ f(\theta_x \omega)]$. Notice that $V_f(x,\eta)= f(x,\eta) -
\psi(x,\rho) - E[\partial_\rho\psi(x,\rho)]\big(\eta(x)-\rho\big)$, since in the last term the partial derivative with respect to $\rho$ commutes with the expectation with respect to the
random environment. In order to estimate the expression (\ref{ec9}), we use the elementary inequality $(x+y)^2\leq{2x^2+2y^2}$. Therefore, we obtain $\mc I_0 \leq 4(\mc I_1+ \mc I_2 +\mc I_3)$, where
\[
\mc I_1 = \frac{1}{K^d} E\Big[ \int \Big(\sum_{x \in B_1^0} E_{\nu_\rho} \big[f(x,\eta)|\eta^{B_1}\big] - \psi(x,\eta^{B_1})\Big)^2 d\nu_\rho\Big],
\]
\[
\mc I_2 =  \frac{1}{K^d} E\Big[ \int \Big(\sum_{x \in B_1^0} \psi(x,\eta^{B_1})
-\psi(x,\rho) - \partial_\rho\psi(x,\rho)[\eta^{B_1}-\rho] \Big)^2 d\nu_\rho \Big],
\]
\[
\mc I_3 = \frac{1}{K^d} E\Big[ E_{\nu_\rho}\Big[ \Big(\sum_{x \in B_1^0}\big(\partial_\rho\psi(x,\rho) - E[\partial_\rho\psi(x,\rho)]\big)\big[\eta^{B_1} - \rho\big] \Big)^2\Big]\Big].
\]

Recall the equivalence of ensembles (see Lemma A.2.2.2 in \cite{kl}):

\begin{lemma}
\label{eqens} Let $h: \{0,1\}^{\bb T_N^d} \to \bb R$ be a local uniformly Lipschitz function. Then, there exists a constant $C$
that depends on $h$ only through its support and its Lipschitz constant, such that
\[
\Big| E_{\nu_\rho} [ h(\eta)|\eta^N] - E_{\nu_{\eta^N}}[h(\eta)]\Big| \leq \frac{C}{N^d}.
\]
\end{lemma}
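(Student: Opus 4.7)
The plan is to reduce the lemma to an elementary combinatorial comparison between the canonical measure $\nu_\rho(\cdot\mid \eta^N = k/N^d)$, which is the uniform distribution on configurations with exactly $k = N^d\eta^N$ particles and is independent of $\rho$, and the product Bernoulli measure $\nu_{k/N^d}$, both restricted to the support of $h$. Let $\Lambda\subset\bb T^d_N$ denote the support of $h$ with $R=|\Lambda|$ fixed, and observe that replacing $h$ by $h - h(\mathbf{0})$ changes neither side of the inequality to be proved; after this reduction, the Lipschitz hypothesis applied along a path of $R$ successive flips from $\mathbf{0}$ yields $\|h\|_\infty \le cR$, where $c$ is the Lipschitz constant.

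Next I would write the two marginals on $\{0,1\}^\Lambda$ explicitly. For $\xi\in\{0,1\}^\Lambda$ with $j=\sum_{x\in\Lambda}\xi(x)$,
\[
p_c(\xi)=\binom{N^d-R}{k-j}\Big/\binom{N^d}{k},\qquad p_g(\xi)=\left(\tfrac{k}{N^d}\right)^{j}\left(1-\tfrac{k}{N^d}\right)^{R-j},
\]
so that the difference of conditional expectations equals $\sum_{\xi} h(\xi)[p_c(\xi)-p_g(\xi)]$. Expanding the binomial coefficients as products of falling factorials gives
\[
\frac{p_c(\xi)}{p_g(\xi)} = \prod_{\ell=0}^{j-1}\frac{k-\ell}{k}\;\prod_{m=0}^{R-j-1}\frac{N^d-k-m}{N^d-k}\;\prod_{n=0}^{R-1}\frac{N^d}{N^d-n},
\]
where each of the at most $3R$ factors has the form $1+O(R/\min(k,N^d-k))$.

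In the bulk regime $R\le k\le N^d-R$ the product differs from $1$ by $O(R^2/N^d)$ uniformly in $\xi$, hence $|p_c(\xi)-p_g(\xi)|\le C(R)\,p_g(\xi)/N^d$, and since $|\{0,1\}^\Lambda|=2^R$ one obtains $\sum_\xi|p_c(\xi)-p_g(\xi)|\le C(R)/N^d$. In the boundary regime $k<R$ (and symmetrically $k>N^d-R$), $p_c$ is supported on $\{\xi:j\le k\}$ with total mass one, while $p_g$ concentrates within $O(R^2/N^d)$ of the same set, so the same total-variation estimate persists. Multiplying by $\|h\|_\infty\le cR$ then completes the proof with a constant depending only on $R$ and $c$, as required.

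The main obstacle is this uniform-in-$k$ control of the ratio $p_c/p_g$: in the interior the factorized expansion above gives the bound essentially for free, but near $k=0$ or $k=N^d$ the ratio itself can be large and one must instead argue directly that the absolute difference $|p_c-p_g|$ is small because both marginals are near-Dirac. Once the boundary bookkeeping is handled, the rest of the proof is just collecting the $O(1/N^d)$ errors coming from the finitely many configurations in $\{0,1\}^\Lambda$, which is precisely the content of Lemma A.2.2.2 of \cite{kl}.
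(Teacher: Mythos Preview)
The paper does not prove this lemma at all: it is stated as a recall of Lemma A.2.2.2 of \cite{kl} and used as a black box. Your proposal therefore goes beyond the paper by sketching the standard combinatorial proof of equivalence of ensembles, and the overall strategy---compare the hypergeometric marginal $p_c$ of the canonical measure on the support $\Lambda$ with the binomial marginal $p_g$ of the grand canonical measure, via the factored ratio---is exactly the argument one finds in \cite{kl}.

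One point to tighten: your ``bulk regime'' claim that the ratio $p_c/p_g$ differs from $1$ by $O(R^2/N^d)$ uniformly in $\xi$ does not follow from the bound $1+O(R/\min(k,N^d-k))$ on each factor unless $\min(k,N^d-k)$ is of order $N^d$, which is strictly stronger than $R\le k\le N^d-R$. For intermediate $k$ (say $k\sim N^{d/2}$) the ratio can deviate from $1$ by more than $O(1/N^d)$. The fix is to bound the \emph{product} $|p_c(\xi)-p_g(\xi)|$ rather than the ratio: for a configuration with $j\ge1$ occupied sites one has $p_g(\xi)\le (k/N^d)^j$, and combining this with the cruder ratio estimate $|p_c/p_g-1|=O(R^2/\min(k,N^d-k))$ still yields a contribution of order $R^2/N^d$ after summing over the $2^R$ configurations. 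With this adjustment the three regimes (small $k$, intermediate $k$, $k$ near $N^d$) merge cleanly and the proof goes through as you outlined.
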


Applying Lemma \ref{eqens}, we get
\[
\frac{1}{K^d} E\Big[ \int \Big(\sum_{x \in B_1^0} E_{\nu_\rho} \big[f(x,\eta)|\eta^{B_1}\big] -
\psi(x,\eta^{B_1})\Big)^2 d\nu_\rho\Big] \leq \frac{C}{K^d},
\]
which vanishes as $K \to \infty$.

 Using a Taylor expansion for
$\psi(x,\rho)$, we obtain that
\[
\frac{1}{K^d} E\Big[ \int \Big(\sum_{x \in B_1^0} \psi(x,\eta^{B_1})
-\psi(x,\rho) - \partial_\rho\psi(x,\rho)[\eta^{B_1}-\rho] \Big)^2 d\nu_\rho
\Big] \leq \frac{C}{K^d},
\]
and also goes to 0 as $K \to \infty$.

Finally, we see that
\[
\mc I_3 = E_{\nu_\rho}\big[(\eta(0) -\rho)^2\big]\cdot E\Big[\Big( \frac{1}{K^d} \sum_{x\in B_1^0} (\partial_\rho\psi(x,\rho) - E[\partial_\rho\psi(x,\rho)]\Big)^2\Big]
\]
and it goes to 0 as $K \to \infty$ by the $L^2$-ergodic theorem. This concludes the proof of Theorem \ref{th:bg}.

\begin{appendix}
\section{Stochastic differential equations on nuclear spaces }\label{a}

\subsection{Countably Hilbert nuclear spaces}

In this subsection we introduce countably Hilbert nuclear spaces which will be the natural environment for the study of the stochastic evolution equations obtained from the martingale problem. Our strategy here is to recall some basic definitions of these spaces. To this end, we follow the ideas of Kallianpur and Perez-Abreu \cite{k,kp} and Gel'fand and Vilenkin \cite{g}.

Let $\Phi$ be a (real) linear space, whose topology $\tau$ is given by an increasing sequence $\|\cdot\|_r, \;\; r\in\bb N$, of Hilbertian norms. Let $\Phi_r$ be the completion of $\Phi$ with respect to $\|\cdot\|_r$, and assume that
\begin{equation*}
\Phi_\infty = \bigcap_{r=1}^\infty\Phi_r.
\end{equation*}

Then $(\Phi_\infty, \tau)$ is a Fréchet space with respect to the metric
\begin{equation}\label{metrica}
\rho(f,g) = \sum^\infty_{r=1}2^{-r}\frac{\|f-g\|_r}{1+\|f-g\|_r},
\end{equation}
and$(\Phi_\infty, \rho)$ is called  a Countably Hilbert space. Since for $n\le m$
\begin{equation}\label{normdesig}
\|f\|_n\le\|f\|_m\qquad\text{for all  } f\in \Phi,
\end{equation}
we have,
$$\Phi_m\subset\Phi_n\;\;\text{for all}\;\;m\ge n.$$

A countably Hilbert space $\Phi_\infty$ is called \textit{Nuclear} if for each $n\ge0$, there exists $m>n$ such that the canonical injection $\pi_{m,n}:\Phi_m\to \Phi_n$ is Hilbert-Schmidt, i.e., if $\{f_j\}_{j\ge1}$ is a complete orthonormal system in $\Phi_m$  we have
$$\sum_{j=1}^\infty\|f_j\|^2_n<\infty.$$

We now characterize the topologic dual $\Phi_\infty'$ of the countably Hilbert nuclear space $\Phi_\infty$ in terms of the topologic dual of the auxiliary spaces $\Phi_n$.

Let $\Phi'_n$ be the dual (Hilbert) space of  $\Phi_n$, and for $\phi\in \Phi'_n$ let
$$\|\phi\|_{-n} = \sup_{\|f\|_n\le1}|\phi[f]|,$$
which, by equation \eqref{normdesig}, implies that
$$\Phi'_n\subset\Phi'_m\;\;\text{for all}\;\;m\ge n.$$

Let $\Phi'_\infty$ be the topologic dual of $\Phi_\infty$ with respect to the strong topology, which is given by the complete system of neighborhoods of zero given by sets of the form, $\{\phi\in \Phi'_\infty:\;\|\phi\|_B<\epsilon\}$, where
$\|\phi\|_B = \sup\{|\phi[f]|:\; f\in B\}$ and $B$ is a bounded set in $\Phi_\infty$.
So,
$$\Phi'_\infty = \bigcup_{r=1}^\infty\Phi'_r.$$

\subsection{Stochastic differential equations}
The aim of this subsection is to recall some results about existence and uniqueness of stochastic evolution equations in nuclear spaces.

We denote by $\mc L(\Phi_\infty,\Phi_\infty)$ (resp. $\mc L(\Phi'_\infty,\Phi'_\infty)$) the class of continuous linear operators from $\Phi_\infty$ to $\Phi_\infty$ (resp.$\Phi'_\infty$ to $\Phi'_\infty$).

A family $\{S(t):t\ge0\}$ of the linear operators on $\Phi_\infty$ is said to be a $C_{0,1}$-semigroup if the following three conditions are satisfied:
\begin{itemize}
\item $S(t_1)S(t_2)  = S(t_1+t_2)$ for all $t_1,t_2\ge0,\;\;S(0) = I$.
\item The map $t\to S(t)f$ is $\Phi_\infty$-continuous for each $f\in \Phi_\infty$.
\item For each $q\ge0$ there exist numbers $M_q>0,\sigma_q>0$ and $p\ge q$ such that
\begin{equation*}
\|S(t)f\|_q\le M_q \; e^{\sigma_q t}\|f\|_p\;\;\;\text{for all  }\;\; f\in\Phi_\infty, \;\;t>0.
\end{equation*}
\end{itemize}

Let $A$ in $\mc L(\Phi_\infty,\Phi_\infty)$ be infinitesimal generator of the semigroup $\{S(t):t\ge0\}$ in $\mc L(\Phi_\infty,\Phi_\infty)$.
The relations 
\begin{eqnarray*}
\phi[S(t)f]&:=& (S'(t)\phi)[f]\;\;\text{for all}\;\;t\ge0,\;f\in\Phi_\infty\;\; \text{and}\;\;\phi\in\Phi'_\infty;\\
\phi[A f]&:=& (A' \phi)[f]\;\;\text{for all}\;\;f\in\Phi_\infty\;\; \text{and}\;\;\phi\in\Phi'_\infty;
\end{eqnarray*}
define the infinitesimal generator $A'$ in $\mc L(\Phi'_\infty,\Phi'_\infty)$ of the semigroup $\{S'(t):t\ge0\}$ in $\mc L(\Phi'_\infty,\Phi'_\infty)$.

Let $(\Sigma,\mc U, P)$  be a complete probability space with a right continuous filtration $(\mc U_t)_{t\ge0}$, $\mc U_0$ containing all the $P$-null sets of $\mc U$, and $M=(M_t)_{t\ge0}$ be a $\Phi'_\infty$-valued martingale with respect to $\mc U_t$, i.e., for each $f\in \Phi_\infty$, $(M_t[f], \mc U_t)$ is a real-valued martingale. 
We are interested in results of existence and uniqueness of the following $\Phi'_\infty$-valued stochastic evolution equation:
\begin{equation}\label{see}
\begin{array}{ccc}
d\xi_t &=& A'\xi_t dt + d M_t, \;\;\;\;t>0,\\
\xi_0&=&\gamma ,
\end{array}
\end{equation}
where $\gamma$ is a $\Phi'_\infty$-valued random variable, and $A$ is the infinitesimal generator of a $C_{0,1}$-semigroup on $\Phi_\infty$. 

We say that $\xi = (\xi_t)_{t\ge0}$ is a $\Phi'_\infty$-solution of the stochastic evolution equation \eqref{see} if the following conditions are satisfied:
\begin{itemize}
\item $\xi_t$ is $\Phi'_\infty$-valued, progressively measurable, and $\mc U_t$-adapted;
\item the following integral identity holds:
$$\xi_t[f] = \gamma[f] + \int_0^t\xi_s[Af]ds + M_t[f],$$
for all $f\in \Phi_\infty,\;t\ge0$ a.s..
\end{itemize} 

It is proved in \cite{kp} the following result on existence and uniqueness of solutions of the stochastic differential equation \eqref{see}:
\begin{proposition}\label{eu-spde}
Assume the conditions below:
\begin{enumerate}
\item $\gamma$ is a $\Phi'_\infty$-valued $\mc U_0$-measurable random element such that, for some $r_0>0,\;E|\gamma|^2_{-r_0}<\infty$;
\item $M=(M_t)_{t\ge0}$ is a $\Phi'_\infty$-valued martingale such that $M_0=0$ and, for each $t\ge0$ and $f\in\Phi,\;E(M_t[f])^2<\infty$;
\item $A$ is a continuous linear operator on $\Phi_\infty$, and is the infinitesimal generator of a $C_{0,1}$-semigroup $\{S(t):\;t\ge0\}$ on $\Phi_\infty$.
\end{enumerate}
Then the $\Phi'_\infty$-valued homogeneous stochastic evolution equation \eqref{see} has a unique solution $\xi = (\xi_t)_{t\ge0}$ given explicitly by the ``evolution solution":
\begin{equation*}
\xi_t = S'(t)\gamma + \int^t_0S'(t-s)dM_s.
\end{equation*}
\end{proposition}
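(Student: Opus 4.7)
The plan is to adapt the classical mild-solution approach from semigroup theory: exhibit the right-hand side of the claimed formula as a bona fide $\mc S = \Phi'_\infty$-valued process, verify that it satisfies the weak integral identity, and then establish uniqueness by a semigroup-duality argument. Throughout, I work weakly, i.e.\ I test everything against $f\in\Phi_\infty$ and exploit that for each such $f$ the scalar process $s\mapsto M_s[f]$ is a real square-integrable $(\mc U_s)$-martingale.

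For existence, I would define
\[
\xi_t[f] \;:=\; \gamma\bigl[S(t)f\bigr] \;+\; \int_0^t S(t-s)f \, dM_s,
\]
where the right-most integral is the Wiener-type stochastic integral of the deterministic $\Phi_\infty$-valued curve $s\mapsto S(t-s)f$ against the $\Phi'_\infty$-valued martingale $M$, constructed by approximating $S(t-s)f$ on a partition of $[0,t]$ by step functions and passing to the $L^2$-limit of the scalar sums $\sum_i\bigl(M_{s_{i+1}}-M_{s_i}\bigr)[S(t-s_i)f]$. The $C_{0,1}$ estimate of hypothesis (3), together with Doob's $L^2$-inequality applied with a dual norm $\|\cdot\|_{-p}$ for $p$ large enough that both $\gamma\in\Phi'_{r_0}\subset\Phi'_p$ and the scalar martingales $M_\cdot[f]$ are uniformly controlled for $f$ in a bounded set of some $\Phi_q$, yields existence of the limit and simultaneously places $\xi_t$ in a fixed $\Phi'_p\subset\Phi'_\infty$. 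Adaptedness is inherited from $\gamma$ and $M$. To verify the integral identity
\[
\xi_t[f] \;=\; \gamma[f] \;+\; \int_0^t\xi_s[Af]\,ds \;+\; M_t[f],
\]
I would expand $\gamma[S(t)f]=\gamma[f]+\int_0^t\gamma[AS(u)f]\,du$ via the fundamental semigroup identity and use a stochastic Fubini theorem to exchange $ds$ and $dM_r$ in the integral term.

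Uniqueness is handled by the standard semigroup-duality trick. If $\xi$ and $\tilde\xi$ are two solutions, the difference $\eta_t:=\xi_t-\tilde\xi_t$ satisfies $\eta_0=0$ and $\eta_t[f]=\int_0^t\eta_s[Af]\,ds$ for every $f\in\Phi_\infty$. Fix $T>0$ and $f\in\Phi_\infty$, and set $\phi(t):=\eta_t[S(T-t)f]$ for $t\in[0,T]$. Writing
\[
\phi(t+h)-\phi(t)\;=\;(\eta_{t+h}-\eta_t)\bigl[S(T-t-h)f\bigr]\;+\;\eta_t\bigl[S(T-t-h)f-S(T-t)f\bigr],
\]
substituting the integral identity in the first summand, dividing by $h$, and letting $h\downarrow 0$, the two contributions tend to $\eta_t[AS(T-t)f]$ and $-\eta_t[AS(T-t)f]$ respectively; hence $\phi'\equiv 0$ and $\eta_T[f]=\phi(T)=\phi(0)=0$. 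Since $f$ is arbitrary, $\eta\equiv 0$.

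The principal technical obstacle is the rigorous construction of the $\Phi'_\infty$-valued stochastic integral $\int_0^t S'(t-s)\,dM_s$ and, equally important, the identification of a single Hilbertian dual $\Phi'_p$ in which the whole solution lives on $[0,T]$. This is where the three hypotheses are used in concert: (1) places $\gamma$ in some $\Phi'_{r_0}$; (2) gives square-integrability of the noise tested against any $f\in\Phi_\infty$; and (3), via $\|S(t)f\|_q\le M_q e^{\sigma_q t}\|f\|_p$, provides the dual estimate $\|S'(t)\varphi\|_{-p}\le M_q e^{\sigma_q t}\|\varphi\|_{-q}$ that confines the evolution to a fixed dual Hilbert space on compact time intervals. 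Once these pieces are in place, everything else reduces to manipulations of real-valued square-integrable martingales, and the explicit evolution formula falls out automatically.
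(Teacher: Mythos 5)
The paper does not prove this proposition: it is quoted verbatim from Kallianpur and Perez-Abreu \cite{kp} with the sentence ``It is proved in \cite{kp} the following result\ldots'', so there is no in-paper argument to compare yours against. Your outline --- existence via the weak ``evolution solution'' $\xi_t[f]=\gamma[S(t)f]+\int_0^t S(t-s)f\,dM_s$ checked against the integral identity, and uniqueness via the duality function $\phi(t)=\eta_t[S(T-t)f]$ --- is precisely the standard argument carried out in that reference, and the structure is sound. The one substantive point your sketch defers rather than settles is the regularization step: hypothesis (2) only gives $E(M_t[f])^2<\infty$ pointwise in $f$, and to build the stochastic integral and to confine the solution to a single dual Hilbert space $\Phi'_p$ one needs the theorem (resting on nuclearity of $\Phi_\infty$, via a closed-graph/Minlos-type argument as in \cite{kp} and \cite{Mit}) that such a martingale has a version living in some fixed $\Phi'_q$ with $\sup_{t\le T}E\|M_t\|_{-q}^2<\infty$; your phrase ``uniformly controlled for $f$ in a bounded set of some $\Phi_q$'' is exactly this nontrivial fact, asserted rather than proved.
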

\begin{remark}\label{remark}
The condition 2 of Proposition \ref{eu-spde} can be obtained if $E(M_t[f])^2 = t Q(f,f),$ where $f\in\Phi_\infty$, and $Q(\cdot,\cdot)$ is a positive definite continuous bilinear form on $\Phi_\infty\times\Phi_\infty$.
\end{remark}

We now state a proposition that gives a sufficient condition for the solution $\xi_t$ of the equation \eqref{see} be a Gaussian process.
\begin{proposition}\label{gaussian}
Assume $\gamma$ is a $\Phi'_\infty$-valued Gaussian element independent of the $\Phi'_\infty$-valued Gaussian martingale with independent increments $M_t$. Then, the solution $\xi=(\xi_t)$ of \eqref{see} is a $\Phi'_\infty$-valued Gaussian process.
\end{proposition}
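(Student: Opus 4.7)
The plan is to exploit the explicit ``evolution solution'' formula from Proposition \ref{eu-spde}, namely
\[
\xi_t \;=\; S'(t)\gamma \;+\; \int_0^t S'(t-s)\,dM_s,
\]
and to show that every finite-dimensional distribution of the $\Phi'_\infty$-valued process $(\xi_t)_{t\ge 0}$ is Gaussian. By the definition of Gaussian process on $\Phi'_\infty$, it suffices to prove that for any $n\ge 1$, any $0\le t_1<\cdots<t_n\le T$ and any $f_1,\ldots,f_n\in\Phi_\infty$, the real random vector $(\xi_{t_1}[f_1],\ldots,\xi_{t_n}[f_n])$ is jointly Gaussian.

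First I would rewrite each coordinate as
\[
\xi_{t_i}[f_i] \;=\; \gamma[S(t_i)f_i] \;+\; \int_0^{t_i} dM_s\bigl[S(t_i-s)f_i\bigr],
\]
using the duality relations $\phi[S(t)f]=(S'(t)\phi)[f]$. The first summand is Gaussian because $\gamma$ is a $\Phi'_\infty$-valued Gaussian element and $f\mapsto \gamma[f]$ is, by definition, Gaussian on every finite collection of test functions. The second summand is a stochastic integral against the $\Phi'_\infty$-valued martingale $M$ with independent Gaussian increments, which I would make sense of through Riemann-type approximations: for a partition $0=s_0<s_1<\cdots<s_m=t_i$ set
\[
I^{(m)}_i \;=\; \sum_{k=0}^{m-1} \bigl(M_{s_{k+1}}-M_{s_k}\bigr)\bigl[S(t_i-s_k)f_i\bigr].
\]
Since $M$ has independent Gaussian increments, the finite collection $\{I^{(m)}_i\}_{i=1}^n$ is a linear combination of independent centred Gaussian real random variables, hence jointly Gaussian. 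The construction of the stochastic integral underlying Proposition \ref{eu-spde} gives $I^{(m)}_i\to \int_0^{t_i}dM_s[S(t_i-s)f_i]$ in $L^2$ as the mesh of the partition tends to zero, and $L^2$-limits of jointly Gaussian vectors are jointly Gaussian (the characteristic function passes to the limit). Therefore the vector of stochastic integrals $\bigl(\int_0^{t_i}dM_s[S(t_i-s)f_i]\bigr)_{i=1}^n$ is jointly Gaussian.

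Finally I would combine the two pieces. Because $\gamma$ is assumed independent of $M$, the Gaussian vector $(\gamma[S(t_i)f_i])_{i=1}^n$ is independent of the Gaussian vector of stochastic integrals just constructed; independent jointly Gaussian vectors have a jointly Gaussian sum. Hence $(\xi_{t_1}[f_1],\ldots,\xi_{t_n}[f_n])$ is jointly Gaussian, which proves that $\xi$ is a $\Phi'_\infty$-valued Gaussian process. The main obstacle I anticipate is purely technical, namely justifying the $L^2$-approximation of $\int_0^t S'(t-s)dM_s$ by the Riemann sums $I^{(m)}_i$ in the $\Phi'_\infty$-valued setting; this uses the $C_{0,1}$-bounds on $S(t)$ together with the bilinear-form estimate $E(M_t[f])^2\le tQ(f,f)$ from Remark \ref{remark}, which control the $L^2$-error of the approximation on each continuity seminorm $\|\cdot\|_{-r}$.
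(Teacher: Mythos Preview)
Your argument is correct and follows the standard route: use the evolution-solution representation $\xi_t=S'(t)\gamma+\int_0^tS'(t-s)\,dM_s$, show that the stochastic integral is Gaussian via Riemann approximation and passage to the $L^2$-limit, and combine with the independent Gaussian initial datum. The paper, however, does not prove this proposition at all---it is stated in the Appendix as a known result taken from the Kallianpur--Perez-Abreu framework \cite{k,kp}, with no argument supplied. So there is nothing to compare your proof against; you have simply filled in what the paper leaves to the literature, and your sketch is essentially the argument one finds in those references.
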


\end{appendix}
\section*{Acknowledgements}
We would like to thank Milton Jara for valuable discussions on nuclear spaces.


\begin{thebibliography}{50}
\bibitem{ek} S. \ Ethier, T.\ Kurtz, {\em Markov Processes: Characterization and convergence}. Wiley Series in Prob. and Math. Stat. (1986). 
 \bibitem{f} A. \ Faggionato, {\em Random walks and exclusion processs among random
conductances on random infinite clusters: Homogenization and hydrodynamic limit}.arXiv:0704.3020v3 .
\bibitem{fjl} A. Faggionato, M.\ Jara, C.\ Landim, {\em Hydrodynamic
    behavior of one dimensional subdiffusive exclusion processes with
    random conductances}. arXiv:0709.0306 . To appear in Probab. Th.
  Rel. Fields (2008).
\bibitem{tc} T.\ Franco, C.\ Landim, { \em Exclusion processes with conductances - Hydrodynamic limit
  of gradient exclusion processes with conductances}. arXiv:0806.3211.
\bibitem{g} I. M.\ Gel'fand, N. Ya.\ Vilenkin, {\em Generalized Functions - vol. 4} . Academic Press. New York, (1964).
\bibitem{gj} P. Gonçalves, M. Jara. {\em Scaling Limits for Gradient Systems in Random Environment.} J. Stat. Phys., 131, 691-716. 2008.
\bibitem{mil} M. Jara. {\em Current and density fluctuations for interacting particle systems with anomalous diffusive behavior.} arXiv:0901.0229
\bibitem{jl} M.\ Jara, C.\ Landim, {\em
Quenched nonequilibrium central limit theorem for a tagged particle
in the exclusion process with bond disorder}. arXiv: math/0603653. Ann. Inst. H. Poincaré, 
Probab. Stat. 44, 341-361, (2008).
\bibitem{k} G.\ Kallianpur {\em Stochastic differential equations in dual of nuclear space with some applications}. Institute for Mathematics and its Applications. 244 (1986).
\bibitem{kp} G.\ Kallianpur, V.\ Perez-Abreu, {\em Stochastic Evolution equations Driven by Nuclear-space-Valued Martingale}. Applied Mathematics and Optimization. 17, 237-272(1988).
\bibitem{kl} C.\ Kipnis, C.\ Landim, {\em Scaling limits of interacting
  particle systems}. Grundlehren der Mathematischen Wissenschaften
  [Fundamental Principles of Mathematical Sciences], 320.
  Springer-Verlag, Berlin, (1999).
\bibitem{Mit} I. Mitoma, {\em Tightness of probabilities on $C([0,1],\mc S')$ and $D([0,1],\mc S')$}. Annals Probab. 11, 989-999 (1983).
\bibitem{papa} G. Papanicolaou, S.R.S. Varadhan, \emph{Boundary value problems with rapidly oscillating random coefficients}, Seria Coll. Math. Soc. Janos Bolyai vol. 27, North-Holland (1979).
\bibitem{pr} A. Piatnitski, E. Remy, {\em Homogenization of Elliptic Difference Operators}, SIAM J. Math. Anal. Vol.33, pp. 53-83, (2001).
\bibitem{sv} A. B. Simas, F. J. Valentim, {\em $W$-Sobolev spaces: Theory, homogenization and applications.} Preprint.
\bibitem{v} F. J. Valentim, {\em Hydrodynamic limit of gradient exclusion processes with conductance on $\bb Z^d$.}.Preprint, Available at arXiv:0903.4993v1 (2009).
\end{thebibliography}
\end{document}